\documentclass[11pt,a4paper,british]{amsart}
\usepackage[defs,semib,cref,href,amsfix]{tms}

\title[Minimal Lagrangian connections]{Minimal Lagrangian Connections on\\ Compact Surfaces}
\author[T.~Mettler]{Thomas Mettler}
\date{20th July 2019}
\address{Institut f\"ur Mathematik, Goethe-Universit\"at Frankfurt, Frankfurt am Main, Germany}
\email{mettler@math.uni-frankfurt.de}

\begin{document}

\begin{abstract}
We introduce the notion of a minimal Lagrangian connection on the tangent bundle of a manifold and classify all such connections in the case where the manifold is a compact oriented surface of non-vanishing Euler characteristic. Combining our classification with results of Labourie and Loftin, we conclude that every properly convex projective surface arises from a unique minimal Lagrangian connection.   
\end{abstract}

\maketitle

\section{Introduction}

\subsection{Background}

A~\textit{projective manifold} is a pair $(M,\mathfrak{p})$ consisting of a smooth manifold $M$ and a~\textit{projective structure} $\mathfrak{p}$, that is, an equivalence class of torsion-free connections on the tangent bundle $TM$, where two such connections are called \textit{projectively equivalent} if they share the same geodesics up to parametrisation. A projective manifold $(M,\mathfrak{p})$ is called~\textit{properly convex} if it arises as a quotient of a  properly convex open set $\tilde{M}\subset \mathbb{RP}^n$ by a group $\Gamma\subset \mathrm{PSL}(n+1,\R)$ of projective transformations which acts discretely and properly discontinuously. The geodesics of $\mathfrak{p}$ are the projections to $M=\Gamma\!\setminus\!\tilde{M}$ of the projective line segments contained in $\tilde{M}$. In particular, locally the geodesics of a properly convex projective structure $\mathfrak{p}$ can be mapped diffeomorphically to segments of straight lines, that is, $\mathfrak{p}$ is~\textit{flat}. 

It follows from the work of Cheng--Yau~\cite{MR0437805,MR859275} that the universal cover $\tilde{M}$ of a properly convex projective manifold $(M,\mathfrak{p})$ determines a unique properly embedded hyperbolic affine sphere $f : \tilde{M} \to \R^{n+1}$, which is asymptotic to the cone over $\tilde{M}$ in $\R^{n+1}$. The~\textit{Blaschke metric} and~\textit{Blaschke connection} induced by $f$ descend to the quotient $\Gamma\!\setminus\!\tilde{M}$ and equip $M$ with a complete Riemannian metric $g$ and projectively flat connection $\nabla \in \mathfrak{p}$, see the work of Loftin~\cite{MR1828223}. The difference between $\nabla$ and the Levi-Civita connection of the Blaschke metric is encoded in terms of a cubic form, the so-called~\textit{Fubini--Pick form} of $f$. For an introduction to affine differential geometry the reader may consult~\cite{MR1311248} as well as~\cite{MR2743442} for a nice survey on affine spheres.

Properly convex projective surfaces are of particular interest, as they may be seen -- through the work of Hitchin~\cite{MR1174252}, Goldman~\cite{MR1053346} and Choi--Goldman~\cite{MR1145415} -- as the natural generalisation of the notion of a hyperbolic Riemann surface. In the case of a properly convex oriented surface $(\Sigma,\mathfrak{p})$, the Fubini--Pick form is the real part of a cubic differential that is holomorphic with respect to the Riemann surface structure on $\Sigma$ defined by the orientation and the conformal equivalence class of the Blaschke metric. Conversely, Wang~\cite{MR1178538} observed that a holomorphic cubic differential $C$ on a closed hyperbolic Riemann surface $(\Sigma,[g])$ determines a unique conformal Riemannian metric $g$ whose Gauss curvature $K_g$ satisfies 
\begin{equation}\label{eq:wang}
K_g=-1+2|C|^2_g,
\end{equation} where $|C|_g$ denotes the point-wise tensor norm of $C$ with respect to the Hermitian metric induced by $g$ on the third power of the canonical bundle of $\Sigma$. Furthermore, the pair $(g,\Re(C))$ can be realized as the Blaschke metric and Fubini--Pick form of a complete hyperbolic affine sphere $f : \tilde{M} \to \R^3$ defined on the universal cover $\tilde{M}$ of $M$. In particular, combining Wang's work with the work of Loftin establishes -- on a compact oriented surface of negative Euler characteristic -- a bijective correspondence between properly convex projective structures and pairs $([g],C)$ consisting of a conformal structure $[g]$ and a cubic holomorphic differential $C$, see~\cite{MR1828223}. This correspondence was also discovered independently by Labourie~\cite{MR2402597}.  Since then, Benoist--Hulin~\cite{MR3039771} have extended the correspondence to noncompact projective surfaces with finite Finsler volume and Dumas--Wolf~\cite{MR3432157} study the case of polynomial cubic differentials on the complex plane. 

In~\cite{MR0066020}, Libermann constructs a para-K\"ahler structure $(h_0,\Omega_0)$ on the open submanifold $A_0\subset \mathbb{RP}^n\times \mathbb{RP}^{n*}$ consisting of non-incident point-line pairs. A para-K\"ahler structure may be thought of as a split-complex analogue of the notion of a Kähler structure. In particular, $h_0$ is a pseudo-Riemannian metric of split-signature $(n,n)$ and $\Omega_0$ a symplectic form, so that there exists an endomorphism of the tangent bundle relating $h_0$ and $\Omega_0$ which squares to become the identity map. In~\cite{MR2854275,MR2854277}, Hildebrand  -- see also the related work~\cite{MR3137456,MR3303003,MR1972461} -- observed that proper affine spheres $f : M \to \R^{n+1}$ correspond to minimal Lagrangian immersions $\hat{f} : M \to A_0$. Thus, the result of Hildebrand, combined with the work of Cheng--Yau, associates a minimal Lagrangian immersion to every properly convex projective manifold.  	

\subsection{Minimal Lagrangian connections}

Here we propose a generalization of the notion of a properly convex projective surface which arises naturally from the concept of a~\textit{minimal Lagrangian connection}. In joint work with Dunajski the author has shown that the construction of Libermann is a special case of a more general result: In~\cite{MR3833818}, it is shown that a projective structure $\mathfrak{p}$ on an $n$-manifold $M$ canonically defines an almost para--K\"ahler structure $(h_{\mathfrak{p}},\Omega_{\mathfrak{p}})$ on the total space of a certain affine bundle $A\to M$, whose underlying vector bundle is the cotangent bundle of $M$. The bundle $A \to M$ has the crucial property that its sections are in one-to-one correspondence with the representative connections of $\mathfrak{p}$. Therefore, fixing a representative connection $\nabla \in \mathfrak{p}$ gives a section $s_{\nabla} : M \to A$ and hence an isomorphism $\psi_{\nabla} : T^*M \to A$, by declaring the origin of the affine fibre $A_p$ to be $s_{\nabla}(p)$ for all $p \in M$. Correspondingly, we obtain a pair $(h_{\nabla},\Omega_{\nabla})=\psi_{\nabla}^*(h_{\mathfrak{p}},\Omega_{\mathfrak{p}})$ on the total space of the cotangent bundle. Besides being a geometric structure of interest in itself (see~\cite{MR3833818} for details), the pair $(h_{\nabla},\Omega_{\nabla})$ has the natural property 
\[
o^*h_{\nabla}=(s_{\nabla})^*h_{\mathfrak{p}}=-\left(\frac{1}{n-1}\right)\mathrm{Ric}^+(\nabla) 
\]
and
\[
o^*\Omega_{\nabla}=(s_{\nabla})^*\Omega_{\mathfrak{p}}=\left(\frac{1}{n+1}\right)\mathrm{Ric}^{-}(\nabla),
\]
where $o : M \to T^*M$ denotes the zero-section and $\mathrm{Ric}^{\pm}(\nabla)$ the symmetric (respectively, the anti-symmetric) part of the Ricci curvature $\mathrm{Ric}(\nabla)$ of $\nabla$. Consequently, we call $\nabla$~\textit{Lagrangian} if the Ricci tensor of $\nabla$ is symmetric, or equivalently, if the zero-section $o$ is a Lagrangian submanifold of $(T^*M,\Omega_{\nabla}$). Likewise, we call $\nabla$~\textit{timelike/spacelike} if $\pm\mathrm{Ric}^+(\nabla)$ is positive definite, or equivalently, if the zero-section $o$ is a timelike/spacelike submanifold of $(T^*M,h_{\nabla})$. The upper sign corresponds to the timelike case and lower sign to the spacelike case. Moreover, we call $\nabla$~\textit{minimal} if the zero-section is a minimal submanifold of $(T^*M,h_{\nabla})$. 

We henceforth restrict our considerations to the case of oriented surfaces. We  show (see \cref{thm:minchar} below) that a timelike/spacelike Lagrangian connection $\nabla$ is minimal if and only if 
\[
R^{ij}\left(2\nabla_iR_{jk}-\nabla_k R_{ij}\right)=0, 
\]
where $R_{ij}$ denotes the Ricci tensor of $\nabla$ and $R^{ij}$ its inverse. We then show that a minimal Lagrangian connection $\nabla$ on an oriented surface $\Sigma$ defines a triple $(g,\beta,C)$ on $\Sigma$, consisting of a Riemannian metric $g$, a $1$-form $\beta$ and a cubic differential $C$, so that the following equations hold  

\begin{equation}\label{eq:mainintro}
K_g=\pm 1+2\,|C|_g^2+\delta_g\beta, \qquad \ov{\partial} C=\left(\beta-\i\star_g \beta\right)\otimes C,\qquad 
\d \beta=0.
\end{equation}
As usual, $\i=\sqrt{-1}$, $\ov{\partial}$ denotes the ``del-bar'' operator with respect to the integrable almost complex structure $J$ induced on $\Sigma$ by $[g]$ and the orientation, $\star_g$, $\delta_g$ and $K_g$ denote the Hodge-star, co-differential and Gauss curvature with respect to $g$, respectively. Recall that $|C|_g$ denotes the point-wise tensor norm of $C$ with respect to the Hermitian metric induced by $g$ on the third power of the canonical bundle of $\Sigma$. As a consequence, we use a result of Labourie~\cite{MR2402597} to prove that if $\nabla$ is a spacelike minimal Lagrangian connection on a compact oriented surface $\Sigma$ defining a flat projective structure $\mathfrak{p}(\nabla)$, then $(\Sigma,\mathfrak{p})$ is a properly convex projective surface. Moreover, the zero-section is a totally geodesic submanifold of $(T^*\Sigma,h_{\nabla})$ if and only if $\nabla$ is the Levi-Civita connection of a hyperbolic metric.

We also show that a minimal Lagrangian connection defines a flat projective structure if and only if $\beta$ vanishes identically. In particular, we recover Wang's equation~\eqref{eq:wang} in the projectively flat case. In the projectively flat case Labourie~\cite{MR2402597} interpreted the first two equations as an instance of Hitchin's Higgs bundle equations~\cite{MR887284}. In the case with $\beta\neq 0$ the above triple of equations falls into the general framework of~\textit{symplectic vortex equations}~\cite{MR1959059} (see also~\cite{MR1086749}). Furthermore, it appears likely that the above equations also admit an interpretation in terms of affine differential geometry, but this will be addressed elsewhere.  

 The last two of the equations~\eqref{eq:mainintro} say that locally there exists a (real-valued) function $r$ so that $\e^{-2r}C$ is holomorphic. As a consequence of this we show that the only examples of minimal Lagrangian connections on the $2$-sphere are Levi-Civita connections of metrics of positive Gauss curvature. 

Furthermore, if $(\Sigma,[g])$ is a compact Riemann surface of negative Euler characteristic $\chi(\Sigma)$, then the metric $g$ of the triple $(g,\beta,C)$ is uniquely determined in terms of $([g],\beta,C)$. This leads to a quasi-linear elliptic PDE of vortex type, which belongs to a class of equations solved in~\cite{MR3137456} using the technique of sub -- and supersolutions (see also~\cite{MR3432157} for the case when $\beta$ vanishes identically). Here instead, we use the calculus of variations and prove existence and uniqueness of a smooth minimum of the following functional defined on the Sobolev space $W^{1,2}(\Sigma)$ 
\[
\mathcal{E}_{\kappa,\tau} : W^{1,2}(\Sigma) \to \ov{\R}, \quad u\mapsto \frac{1}{2}\int_{\Sigma}|\d u|^2_{g_0}-2u-\kappa\e^{2u}+\tau\e^{-4u}dA_{g_0},
\]
where $\kappa,\tau \in C^{\infty}(\Sigma)$ satisfy $\kappa<0$, $\tau \geqslant 0$ and $g_0 $ denotes the hyperbolic metric in the conformal equivalence class $[g]$. 

An immediate consequence of~\eqref{eq:mainintro} is that the area of a spacelike minimal Lagrangian connection $\nabla$ -- by which we mean the area of $o(\Sigma)\subset (T^*\Sigma,h_{\nabla})$ -- satisfies the inequality 
\[
\mathrm{Area}(\nabla)\geqslant-2\pi\chi(\Sigma).
\] 
Therefore, we call a spacelike minimal Lagrangian connection with area $-2\pi\chi(\Sigma)$ \textit{area minimising}. We obtain:
\setcounter{section}{7}
\setcounter{thm}{8}
\begin{thm}
Let $\Sigma$ be a compact oriented surface with $\chi(\Sigma)<0$. Then we have:
\begin{itemize}
\item[(i)] there exists a one-to-one correspondence between area minimising Lagrangian connections on $T\Sigma$ and pairs $([g],\beta)$ consisting of a conformal structure $[g]$ and a closed $1$-form $\beta$ on $\Sigma$;
\item[(ii)] there exists a one-to-one correspondence between non-area minimising minimal Lagrang\-ian connections on $T\Sigma$ and pairs $([g],C)$ consisting of a conformal structure $[g]$ and a non-trivial cubic differential $C$ on $\Sigma$ that satisfies $\ov{\partial} C=\left(\beta-\i\star_g \beta\right)\otimes C$ for some closed $1$-form $\beta$. 
\end{itemize}
\end{thm}
\setcounter{section}{1}
Using the classification of properly convex projective structures by Loftin \cite{MR1828223} and Labourie~\cite{MR2402597}, it follows that every properly convex projective structure on $\Sigma$ arises from a unique spacelike minimal Lagrangian connection, paralleling the result of Hildebrand.

\subsection{Related work} After the first version of this article appeared on the arXiv, Daniel Fox informed the author about his interesting paper~\cite{MR3137456}, which contains some closely related results. Here we briefly compare our results which were arrived at independently. In a previous eprint~\cite{arXiv:0909.1897} (see also~\cite{MR3738964}), Fox introduced the notion of an AH (affine hypersurface) structure which is a pair comprising a projective structure and a conformal structure satisfying a compatibility condition which is automatic in two dimensions. He then proceeds to postulate Einstein equations for AH structures, which are motivated by Calderbank's work on Einstein--Weyl structures on surfaces~\cite{MR1656822}. Subsequently in~\cite{MR3137456}, Fox classifies the Einstein AH structures on closed oriented surfaces and in particular, observes that in the case of negative Euler-characteristic they precisely correspond to the properly convex projective structures. On the $2$-sphere $S^2$ he recovers the Einstein--Weyl structures of Calderbank. On $S^2$, the Einstein AH structures and the minimal Lagrangian connections ``overlap'' in the space of metrics of constant positive Gauss--curvature. In the case of negative Euler characteristic, the minimal Lagrangian connections are however~\textit{strictly} more general than the Einstein AH structures. Indeed, in this case, the projective structures arising from minimal Lagrangian connections provide a new and previously unstudied class of projective structures. 

This new class of (possibly) curved projective structures may be thought of as a generalization of the notion of a (flat) properly convex projective structure. One would expect that this class exhibits interesting properties, similar to those of properly convex projective structures. As a first result in this direction, it is shown in~\cite{MR3968880}, that the geodesics of a minimal Lagrangian connection naturally give rise to a flow admitting a dominated splitting (a certain weakening of the notion of an Anosov flow). In particular, this flow provides a generalization of the geodesic flow induced by the Hilbert metric on the quotient surface of a divisible convex set. 

Furthermore, in joint work in progress by the author and A.~Cap~\cite{CapMettler19}, the notion of a minimal Lagrangian connection is extended to all so-called $|1|$-graded parabolic geometries. This is a class of geometric structures which, besides projective geometry,   includes (but is not restricted to) conformal geometry, (almost) Grassmannian geometry and (almost) quaternionic geometry. 

\subsection*{Acknowledgements} The author is grateful to Andreas Cap, Daniel Fox, Gabriel Paternain, Nigel Hitchin, Norbert Hunger\-b\"uhler, Tobias Weth and Luca Galimberti for helpful conversations or correspondence. The author also would like to thank the anonymous referees for several helpful suggestions.

\section{Preliminaries}

Throughout the article $\Sigma$ will denote an oriented smooth $2$-manifold with\-out boundary. All manifolds and maps are assumed to be smooth and we adhere to the convention of summing over repeated indices. 

\subsection{The coframe bundle}

We denote by $\upsilon : F \to \Sigma$ the bundle of orientation preserving coframes whose fibre at $p \in \Sigma$ consists of the linear isomorphisms $f : T_p\Sigma \to \R^2$ that are orientation preserving with respect to the fixed orientation on $\Sigma$ and the standard orientation on $\R^2$. Recall that $\upsilon : F \to \Sigma$ is a principal right $\mathrm{GL}^+(2,\R)$-bundle with right action defined by the rule $R_a(f)=f\cdot a=a^{-1}\circ f$ for all $a \in \mathrm{GL}^+(2,\R)$. The bundle $F$ is equipped with a tautological $\R^2$-valued $1$-form $\omega=(\omega^i)$ defined by $\omega_f=f\circ \upsilon^{\prime}_f$, and this $1$-form satisfies the equivariance property $R_a^*\omega=a^{-1}\omega$. A torsion-free connection $\nabla$ on $T\Sigma$ corresponds to a $\mathfrak{gl}(2,\R$)-valued connection $1$-form $\theta=(\theta^i_j)$ on $F$ satisfying the structure equations
\begin{align}\label{eq:struceqlincon1}
\d \omega&=-\theta\wedge \omega,\\ \label{eq:struceqlincon2}
\d\theta&=-\theta\wedge\theta+\Theta,
\end{align}
where $\Theta$ denotes the curvature $2$-form of $\theta$. The Ricci curvature of $\nabla$ is the (not necessarily symmetric) covariant $2$-tensor field $\mathrm{Ric}(\nabla)$ on $\Sigma$ satisfying
\[
\mathrm{Ric}(\nabla)(X,Y)=\tr \left( Z\mapsto \nabla_Z\nabla_X Y-\nabla_X\nabla_Z Y-\nabla_{[Z,X]}Y\right), \quad Z\in \Gamma(TM),
\]
for all vector fields $X,Y$ on $\Sigma$. Denoting by $\mathrm{Ric}^{\pm}(\nabla)$ the symmetric (respectively, the anti-symmetric) part of the Ricci curvature of $\nabla$, so that $\mathrm{Ric}(\nabla)=\mathrm{Ric}^{+}(\nabla)+\mathrm{Ric}^{-}(\nabla)$, the (projective) Schouten tensor of $\nabla$ is defined as
\[
\mathrm{Schout}(\nabla)=\mathrm{Ric}^+(\nabla)-\frac{1}{3}\mathrm{Ric}^{-}(\nabla). 
\]
Since the components of $\omega$ are a basis for the $\upsilon$-semibasic forms on $F$,\footnote{Recall that a $1$-form $\alpha \in \Omega^1(M)$ is semibasic for the projection $\pi : M \to N$ if $\alpha$ vanishes on vector fields that are tangent to the $\pi$-fibres.} it follows that there exist real-valued functions $S_{ij}$ on $F$ such that 
\[
\upsilon^*\mathrm{Schout}(\nabla)=\omega^tS\omega=S_{ij}\omega^i\otimes \omega^j,
\]
where $S=(S_{ij})$. Note that 
\[
R_a^*S=a^tSa
\] for all $a \in \mathrm{GL}^+(2,\R)$, since $\omega^tS\omega$ is invariant under $R_a$. In terms of the functions $S_{ij}$ the curvature $2$-form $\Theta=(\Theta^i_j)$ can be written as\footnote{For a matrix $S=(S_{ij})$ we denote by $S_{(ij)}$ its symmetric part and by $S_{[ij]}$ its anti-symmetric part, so that $S_{ij}=S_{(ij)}+S_{[ij]}$.}
\begin{equation}\label{eq:curvform}
\Theta^i_j=\left(\delta^i_{[k}S_{l]j}-\delta^i_jS_{[kl]}\right)\omega^k\wedge\omega^l,
\end{equation}
or explicitly
\begin{equation}\label{eq:curv2formex}
\Theta=\begin{pmatrix} 2S_{21}-S_{12} & S_{22}\\ -S_{11} & S_{21}-2S_{12}\end{pmatrix}\omega^1\wedge\omega^2.
\end{equation}

\subsection{The orthonormal coframe bundle}
Recall that if $g$ is a Riemannian metric on the oriented surface $\Sigma$, the Levi-Civita connection $(\varphi^i_j)$ of $g$ is the unique connection on the coframe bundle $\upsilon : F \to \Sigma$ satisfying 
\begin{align*}
\d \omega^i&=-\varphi^i_j\wedge\omega^j,\\
\d g_{ij}&=g_{ik}\varphi^k_j+g_{kj}\varphi^k_i,
\end{align*}
where we write $\upsilon^*g=g_{ij}\omega^i\otimes\omega^j$ for real-valued functions $g_{ij}=g_{ji}$ on $F$. Differentiating these equations implies that there exists a unique function $K_g$, the Gauss curvature of $g$, so that
\[
\d \varphi^i_j+\varphi^i_k\wedge\varphi^k_j=g_{jk}K_g\omega^i\wedge\omega^k. 
\] 
We may reduce $F$ to the $\mathrm{SO}(2)$-subbundle $F_g$ consisting of orientation preserving coframes that are also orthonormal with respect to $g$ , that is, the bundle defined by the equations $g_{ij}=\delta_{ij}$. On $F_g$ the identity $\d g_{ij}= 0$ implies the identities $\varphi^1_1=\varphi^2_2= 0$ as well as $\varphi^1_2+\varphi^2_1= 0$. Therefore, writing $\varphi:=\varphi^2_1$, we obtain the structure equations
\begin{align}\label{eq:struceqriemcon}
\begin{split}
\d \omega_1&=-\omega_2\wedge\varphi,\\
\d\omega_2&=-\varphi\wedge\omega_1,\\
\d \varphi&=-K_g\omega_1\wedge\omega_2,
\end{split}
\end{align}
where $\omega_i=\delta_{ij}\omega^j$. 
Continuing to denote the basepoint projection $F_g \to \Sigma$ by $\upsilon$, the area form $dA_g$ of $g$ satisfies $\upsilon^*dA_g=\omega_1\wedge\omega_2$. Also, note that a complex-valued $1$-form $\alpha$ on $\Sigma$ is a $(1,\! 0)$-form for the complex structure $J$ induced on $\Sigma$ by $g$ and the orientation if and only if $\upsilon^*\alpha$ is a complex multiple of the complex-valued form $\omega=\omega_1+\i\omega_2$. In particular, denoting by $K_{\Sigma}$ the canonical bundle of $\Sigma$ with respect to $J$, a section $A$ of the $\ell$-th tensorial power of $K_{\Sigma}$ satisfies $\upsilon^*A=a\omega^{\ell}$ for some unique complex-valued function $a$ on $F_g$. Denote by $S^3_0(T^*\Sigma)$ the trace-free part of $S^3(T^*\Sigma)$ with respect to $[g]$, where $S^3(T^*\Sigma)$ denotes the third symmetrical power of the cotangent bundle of $\Sigma$. The proof of the following lemma is an elementary computation and thus omitted. 
\begin{lem}\label{lem:cubisymtrcor3}
Suppose $W \in \Gamma\left(S^3_0(T^*\Sigma)\right)$. Then there exists a unique cubic differential $C \in \Gamma(K_{\Sigma}^3)$ so that $\Re(C)=W$. Moreover, writing $\upsilon^*W=w_{ijk}\omega_i\otimes\omega_j\otimes\omega_k$ for unique real-valued functions $w_{ijk}$ on $F_g$, totally symmetric in all indices, the cubic differential satisfies $\upsilon^*C=(w_{111}+\i w_{222})\omega^3$.    
\end{lem}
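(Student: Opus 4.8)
The plan is to carry out the entire computation on the orthonormal coframe bundle $\upsilon : F_g \to \Sigma$, where the tautological forms $\omega_1,\omega_2$ trivialise everything and the metric takes the constant form $g_{ij}=\delta_{ij}$. The key observation is that the complex cubic form $\omega^3=(\omega_1+\i\omega_2)^{\otimes 3}$ has real and imaginary parts that already exhaust the trace-free symmetric cubes. Concretely, I would first expand $\omega^3=\sum_{i,j,k}\varepsilon_i\varepsilon_j\varepsilon_k\,\omega_i\otimes\omega_j\otimes\omega_k$ with $\varepsilon_1=1$ and $\varepsilon_2=\i$, record the (totally symmetric) coefficient functions of $\Re(\omega^3)$ and $\Im(\omega^3)$, and check directly that both are trace-free with respect to $\delta_{ij}$. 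Since $S^3(T^*\Sigma)$ has rank $4$ and the trace map onto $S^1(T^*\Sigma)$ has rank $2$, the bundle $S^3_0(T^*\Sigma)$ has rank $2$; as $\Re(\omega^3)$ and $\Im(\omega^3)$ are manifestly linearly independent, they form a pointwise real basis of $S^3_0(T^*\Sigma)$.

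With this in hand the existence and uniqueness statement is immediate: the real-linear bundle map $K_{\Sigma}^3\to S^3_0(T^*\Sigma)$, $C\mapsto \Re(C)$, sends $c\,\omega^3$ (with $c$ the complex function representing $C$) to a combination of the basis forms $\Re(\omega^3),\Im(\omega^3)$, and is therefore a fibrewise isomorphism of rank-$2$ real bundles; hence it is a bijection on sections. To extract the explicit formula I would write $c=a+\i b$ and match $\Re(c\,\omega^3)=\upsilon^*S$ coefficient by coefficient. Comparing the $\omega_1^{\otimes 3}$ and $\omega_2^{\otimes 3}$ coefficients gives $a=s_{111}$ and $b=s_{222}$, while the mixed coefficients reproduce exactly the two trace-free relations $s_{122}=-s_{111}$ and $s_{112}=-s_{222}$; these are automatically satisfied because $S\in\Gamma(S^3_0(T^*\Sigma))$, so the system is consistent and yields $c=s_{111}+\i s_{222}$, that is, $\upsilon^*C=(s_{111}+\i s_{222})\omega^3$.

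The only point requiring a little care is well-definedness: one must check that $c=s_{111}+\i s_{222}$ really is the frame representative of a section of $K_{\Sigma}^3$, i.e. that it transforms with the correct weight $R_a^*c=\e^{3\i\phi}c$ under the rotation action of $a\in\mathrm{SO}(2)$ by angle $\phi$. This follows from $R_a^*\omega=\e^{-\i\phi}\omega$ together with the $R_a$-invariance of $\upsilon^*S$, but it is also subsumed by the basis argument above, which produces $C$ intrinsically without ever choosing a frame. I do not expect a genuine obstacle here: the whole statement is an elementary fibrewise linear-algebra identity between the trace-free symmetric cube and the third power of the canonical bundle, and the ``main step'' is simply the bookkeeping that identifies the trace-free conditions on the $s_{ijk}$ with the reality and consistency conditions for $c$.
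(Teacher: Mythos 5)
Your proof is correct, and in fact the paper omits the proof entirely, stating only that it is ``an elementary computation''; your argument is precisely that computation carried out in full. Expanding $\omega^3=(\omega_1+\i\omega_2)^{\otimes 3}$ on $F_g$, checking that $\Re(\omega^3)$ and $\Im(\omega^3)$ are trace-free and give a pointwise basis of the rank-two bundle $S^3_0(T^*\Sigma)$, matching coefficients to obtain $c=s_{111}+\i s_{222}$ (with the mixed coefficients absorbed by the trace-free relations $s_{122}=-s_{111}$, $s_{112}=-s_{222}$), and confirming the equivariance $R_a^*c=\e^{3\i\phi}c$ needed for $c\,\omega^3$ to descend to a section of $K_{\Sigma}^3$, settles existence, uniqueness and the explicit formula exactly as the lemma asserts.
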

In complex notation, the structure equations of a cubic differential $C \in \Gamma(K_{\Sigma}^3)$ can be written as follows. Writing $\upsilon^*C=c\omega^3$ for a complex-valued function $c$ on $F_g$, it follows from the $\mathrm{SO}(2)$-equivariance of $c\omega^3$ that there exist complex-valued functions $c^{\prime}$ and $c^{\prime\prime}$ on $F_g$ such that
\[
\d c=c^{\prime}\omega+c^{\prime\prime}\ov{\omega}+3\i c\varphi,
\]
where we write $\ov{\omega}=\omega_1-\i \omega_2$. Note that the Hermitian metric induced by $g$ on $K_{\Sigma}^3$ has Chern connection $\mathrm{D}$ given by 
\[
c \mapsto \d c-3\i c\varphi.
\]
In particular, the $(0,\! 1)$-derivative of $C$ with respect to $\mathrm{D}$ is represented by $c^{\prime\prime}$, that is, $\upsilon^*(\mathrm{D}^{0,1}C)=c^{\prime\prime}\omega^3\otimes \ov{\omega}$. Since $\ov{\partial}=\mathrm{D}^{0,1}$, we obtain
\begin{equation}\label{eq:delbarframe}
\upsilon^*\left(\ov{\partial}C\right)=c^{\prime\prime}\omega^3\otimes\ov{\omega}.
\end{equation}
Also, we record the identity 
\[
\upsilon^*|C|_g^2=|c|^2. 
\]

Moreover, recall that for $u \in C^{\infty}(\Sigma)$ we have the following standard identity for the change of the Gauss curvature of a metric $g$ under conformal rescaling 
\[
K_{\e^{2u}g}=\e^{-2u}\left(K_g-\Delta_g u\right),
\]
where $\Delta_g=-\left(\delta_g \d+\d \delta_g\right)$ is the negative of the Laplace--Beltrami operator with respect to $g$. Also, 
\[
dA_{\e^{2u}g}=\e^{2u}dA_g
\]
for the change of the area form $dA_g$, 
\[
\Delta_{\e^{2u}g}=\e^{-2u}\Delta_g 
\]
for $\Delta_g$ acting on functions and
\[
\delta_{\e^{2u}g}=\e^{-2u}\delta_g
\]
for the co-differential acting on $1$-forms. Finally, the norm of $C$ changes as
\[
|C|^2_{\e^{2u}g}=\e^{-6u}|C|^2_{g}.
\] 

\subsection{The cotangent bundle and induced structures} Recall that we have a $\mathrm{GL}^+(2,\R)$ representation $\varrho$ on $\R_2$ -- the real vector space of row vectors of length two with real entries -- defined by the rule $\varrho(a) \xi=\xi a^{-1}$ for all $\xi \in \R_2$ and $a \in \mathrm{GL}^+(2,\R)$. The cotangent bundle of $\Sigma$    is the vector bundle associated to the coframe bundle $F$ via the representation $\varrho$, that is, the bundle obtained by taking the quotient of $F\times \R_2$ by the $\mathrm{GL}^+(2,\R)$-right action induced by $\varrho$. Consequently, a $1$-form on $\Sigma$ is represented by an $\R_2$-valued function $\xi$ on $F$ which is $\mathrm{GL}^+(2,\R)$-equivariant, that is, $\xi$ satisfies $R_a^*\xi=\xi a$ for all $a \in \mathrm{GL}^+(2,\R)$. 

Using $\theta$ we may define a Riemannian metric $h_{\nabla}$ as well as a symplectic form $\Omega_{\nabla}$ on $T^*\Sigma$ as follows. Let
\[
\pi : F\times \R_2 \to \left(F\times \R_2\right)/\mathrm{GL}^+(2,\R)\simeq T^*\Sigma
\]
denote the quotient projection. Writing 
\[
\psi=\d \xi-\xi\theta-\xi\omega\xi-\omega^tS^t
\] 
or in components $\psi=(\psi_i)$ with
\[
\psi_i=\d \xi_i-\xi_j\theta^j_i-\xi_j\omega^j\xi_i-S_{ji}\omega^j,
\]
we consider the covariant $2$-tensor field $T_{\nabla}=\psi\omega:=\psi_i\otimes \omega^i$. Note that the $\pi$-semibasic $1$-forms on $F\times \R_2$ are given by the components of $\omega$ and $\d \xi-\xi\theta$, or, equivalently, by the components of $\omega$ and $\psi$. Indeed, the components of $\omega$ are semibasic by the definition of $\omega$. Moreover, if $X_v$ for $v \in \mathfrak{gl}{(2,\R)}$ is a fundamental vector field for the coframe bundle $F \to \Sigma$, that is, the vector field associated to the flow $R_{\exp(tv)}$, then
\begin{align*}
(\d \xi-\xi\theta)(X_v)&=-\xi\theta(X_v)+\lim_{t\to 0}\frac{1}{t}\left((R_{\exp(tv)})^*\xi-\xi\right)\\
&=-\xi v+\lim_{t\to 0}\frac{1}{t}\left(\xi\exp(tv)-\xi\right)=-\xi v+\xi v=0,
\end{align*}
where we have used that the connection $\theta$ maps a fundamental vector field $X_v$ to its generator $v \in \mathfrak{gl}(2,\R)$. Since the fundamental vector fields span the vector fields tangent to fibres of $\pi$, it follows that the components of $\d \xi-\xi\theta$ are $\pi$-semibasic.
Moreover, 
\begin{align}
\label{eq:equieta} R_a^*\psi&=\d \xi\, a-\xi a a^{-1}\theta a-\xi a a^{-1}\omega\xi a-\omega^t (a^{-1})^t a^tS^ta=\psi a,\\
\label{eq:equiomega} R_a^*\omega&=a^{-1}\omega, 
\end{align}
for all $a \in \mathrm{GL}^+(2,\R)$, it follows that the $\pi$-semibasic tensor field $T_{\nabla}$ is invariant under the $\mathrm{GL}^+(2,\R)$-right action and hence there exists a unique symmetric covariant $2$-tensor field $h_{\nabla}$ and a unique anti-symmetric covariant $2$-tensor field $\Omega_{\nabla}$ on $T^*\Sigma$ such that
\[
\pi^*\left(h_{\nabla}+\Omega_{\nabla}\right)=T_{\nabla}. 
\]
Using the structure equation~\eqref{eq:struceqlincon1}, we compute
\begin{align*}
\pi^*\Omega_{\nabla}&=\d \xi_i\wedge \omega^i-\xi_j\theta^j_i\wedge\omega^i-\xi_i\xi_j\omega^j\wedge\omega^i-S_{ji}\omega^j\wedge\omega^i\\
&=\d \xi_i\wedge\omega^i+\xi_i\d\omega^i-S_{ij}\omega^i\wedge\omega^j\\
&=\d(\xi_i\omega^i)-S_{[ij]}\omega^i\wedge\omega^j.
\end{align*}
The $1$-form $\xi\omega=\xi_i\omega^i$ on $F\times \R_2$ is $\pi$-semibasic and $R_a$ invariant, hence the $\pi$-pullback of a unique $1$-form $\tau$ on $T^*\Sigma$ which is the tautological $1$-form of $T^*\Sigma$. Recall that the canonical symplectic form on $T^*\Sigma$ is $\Omega_0=\d \tau$, hence $\Omega_{\nabla}$ defines a symplectic structure on $T^*\Sigma$ which is the canonical symplectic structure twisted with the (closed) $2$-form $\frac{1}{3}\mathrm{Ric}^{-}(\nabla)$
\[
\Omega_{\nabla}=\Omega_0+\Upsilon^*\left(\frac{1}{3}\mathrm{Ric}^{-}(\nabla)\right), 
\]
where $\Upsilon : T^*\Sigma \to \Sigma$ denotes the basepoint projection. In particular, denoting by $o : \Sigma \to T^*\Sigma$ the zero $\Upsilon$-section, the definition of the Schouten tensor gives
\[
o^*\Omega_{\nabla}=\frac{1}{3}\mathrm{Ric}^{-}(\nabla). 
\]
This shows:
\begin{ppn}
The zero section of $T^*\Sigma$ is a $\Omega_{\nabla}$-Lagrangian submanifold if and only if $\nabla$ has symmetric Ricci tensor. 
\end{ppn}
Which motivates:
\begin{defn}
A torsion-free connection $\nabla$ on $T\Sigma$ is called~\textit{Lagrangian} if $\mathrm{Ric}^{-}(\nabla)$ vanishes identically. 
\end{defn}

Also, we obtain for the symmetric part
\[
\pi^*h_{\nabla}=\psi\circ \omega:=\psi_1\circ \omega^1+\psi_2\circ \omega^2
\]
where $\circ$ denotes the symmetric tensor product. Since the four $1$-forms $\psi_1,\psi_2,\omega^1,\omega^2$ are linearly independent, it follows that $h_{\nabla}$ is non-degenerate and hence defines a pseudo-Riemannian metric of split signature $(1,1,-1,-1)$ on $T^*\Sigma$. 
\begin{rmk}
The motivation for introducing the pair $(h_{\nabla},\Omega_{\nabla})$ is its projective invariance, i.e., suitably interpreted, the pair $(h_{\nabla},\Omega_{\nabla})$ does only depend on the projective equivalence class of the connection $\nabla$. Moreover, the metric $h_{\nabla}$ is anti-self-dual and Einstein. We refer the reader to~\cite{MR3833818} as well as~\cite{MR3210600} for further details.  
\end{rmk}
From the definition of the Schouten tensor and $h_{\nabla}$ we immediately obtain
\begin{equation}\label{eq:pullbackmetriccon}
o^*h_{\nabla}=-\mathrm{Ric}^+(\nabla).
\end{equation}
Following standard pseudo-Riemannian submanifold theory, we call a tangent vector $v$~\textit{timelike} if $h_{\nabla}(v,v)<0$ and~\textit{spacelike} if $h_{\nabla}(v,v)>0$. Thus~\eqref{eq:pullbackmetriccon} motivates: 
\begin{defn}
A torsion-free connection $\nabla$ on $T\Sigma$ is called~\textit{timelike} if $\mathrm{Ric}^+(\nabla)$ is positive definite and~\textit{spacelike} if $\mathrm{Ric}^+(\nabla)$ is negative definite. 
\end{defn}

\section{Twisted Weyl connections}

We will see that timelike/spacelike minimal Lagrangian connections are \textit{twisted Weyl connections}. In this section we study some properties of this class of connections that we will need later during the classification of spacelike minimal Lagrangian connections. 

Let $[g]$ be a conformal structure on the smooth oriented surface $\Sigma$. By a $[g]$-Weyl connection on $\Sigma$ we mean a torsion-free connection on $T\Sigma$ preserving the conformal structure $[g]$. It follows from Koszul's identity that a $[g]$-Weyl connection can be written in the following form
\[
{}^{(g,\beta)}\nabla={}^g\nabla+g\otimes \beta^{\sharp}-\beta\otimes\mathrm{Id}-\mathrm{Id}\otimes \beta,
\]
where $g \in [g]$, $\beta \in \Omega^1(\Sigma)$ is a $1$-form and $\beta^{\sharp}$ denotes the $g$-dual vector field to $\beta$. We will use the notation ${}^{[g]}\nabla$ to denote a general $[g]$-Weyl connection.  
\begin{defn}\label{defn:twistconf}
A~\textit{twisted Weyl connection} $\nabla$ on $(\Sigma,[g])$ is a connection on the tangent bundle of $\Sigma$ which can be written as $\nabla={}^{[g]}\nabla+\alpha$ for some $[g]$-Weyl connection ${}^{[g]}\nabla$ and some $1$-form $\alpha$ with values in $\mathrm{End}(T\Sigma)$ satisfying the following properties:
\begin{itemize}
\item[(i)] $\alpha(X)$ is trace-free and $[g]$-symmetric for all $X \in \Gamma(T\Sigma)$;  
\item[(ii)] $\alpha(X)Y=\alpha(Y)X$ for all $X,Y \in \Gamma(T\Sigma)$.
\end{itemize}\end{defn}
Note that if $\alpha$ satisfies the above properties, then ${}^{[g]}\nabla+\alpha$ is torsion-free. Moreover, the covariant $3$-tensor obtained by lowering the upper index of $\alpha$ with a metric $g \in [g]$ gives a section of $\Gamma(S^3_0(T^*\Sigma))$. Conversely, every $\mathrm{End}(T\Sigma)$-valued $1$-form on $\Sigma$ satisfying the above properties arises in this way. In other words, fixing a Riemannian metric $g \in [g]$ allows to identify the twist term $\alpha$ with a cubic differential.  

Fixing a metric $g\in [g]$, the connection form $\theta=(\theta^i_j)$ of a twisted Weyl connection is given by
\[
\theta^i_j=\varphi^i_j+\left(b_kg^{ki}g_{jl}-\delta^i_jb_l-\delta^i_lb_j+a^i_{jl}\right)\omega^l,
\]
where the map $(g_{ij}) : F \to S^2(\R_2)$ represents the metric $g$, the map $(b_i) : F \to \R_2$ represents the $1$-form $\beta$ and the map $(a^i_{jk}) : F \to \R^2\otimes S^2(\R_2)$ represents the $1$-form $\alpha$. Moreover, $(\varphi^i_j)$ denote the Levi-Civita connection forms of $g$. Reducing to the bundle $F_g$ of $g$-orthonormal orientation preserving coframes, the connection form becomes
\[
\theta=\begin{pmatrix} -\beta & \star_g\beta-\varphi \\ \varphi-\star_g\beta & -\beta\end{pmatrix}+\begin{pmatrix} a^1_{11}\omega_1+a^1_{12}\omega_2 & a^1_{12}\omega^1+a^1_{22}\omega_2 \\ a^2_{11}\omega_1+a^2_{12}\omega_2 & a^2_{12}\omega_1+a^2_{22}\omega_2\end{pmatrix}, 
\]
where we use the identity $\upsilon^*\left(\star_g\beta\right)=-b_2\omega_1+b_1\omega_2$. By definition, on $F_g$ the functions $a^i_{jk}$ satisfy the identities 
\[
a^{i}_{jk}=a^i_{kj}, \qquad a^{k}_{kj}=0, \qquad \delta_{ki}a^{k}_{jl}=\delta_{kj}a^k_{il}. 
\]
Thus, writing $c_1=a^1_{11}$ and $c_2=a^2_{22}$, we obtain
\[
\theta=\begin{pmatrix} -\beta & \star_g \beta-\varphi \\ \varphi-\star_g\beta & -\beta\end{pmatrix}+\begin{pmatrix} c_1\omega_1-c_2\omega_2 & -c_2\omega_1-c_1\omega_2\\ -c_2\omega_1-c_1\omega_2 & -c_1\omega_1+c_2 \omega_2\end{pmatrix}.
\]
In order to compute the curvature form of $\theta$ we first recall that we write $\upsilon^*\beta=b_i\omega_i$ and since $b_i\omega_i$ is $\mathrm{SO}(2)$-invariant, it follows that there exist unique real-valued functions $b_{ij}$ on $F_g$ such that
\begin{align*}
\d b_1&=b_{11}\omega_1+b_{12}\omega_2+b_2\varphi,\\
\d b_2&=b_{21}\omega_1+b_{22}\omega_2-b_1\varphi.
\end{align*}
Recall also that the area form of $g$ satisfies $\upsilon^*dA_g=\omega_1\wedge\omega_2$ and since $\star_g 1=dA_g$, we get 
\[
\upsilon^*\delta_g \beta=-(b_{11}+b_{22}),
\]
as well as
\[
\upsilon^*\left(\d\!\star_g\!\beta\right)=(b_{11}+b_{22})\omega_1\wedge\omega_2.
\]
Since $c_1+\i c_2$ represents a cubic differential on $\Sigma$, there exist unique real-valued functions $c_{ij}$ on $F_g$ such that
\begin{align*}
\d c_1&=c_{11}\omega_1+c_{12}\omega_2-3c_2\varphi,\\
\d c_2&=c_{21}\omega_1+c_{22}\omega_2+3c_1\varphi.
\end{align*}
Consequently, a straightforward calculation shows that the curvature form $\Theta=\d\theta+\theta\wedge\theta$ satisfies
\begin{multline}\label{eq:curv2formtwistconfcon}
\Theta=\begin{pmatrix} -\d \beta & K_gdA_g+\d \star_g\beta-\frac{1}{2}|\alpha|^2_g\omega_1\wedge\omega_2\\ -K_gd A_g- \d \star_g\beta+\frac{1}{2}|\alpha|^2_g\omega_1\wedge\omega_2& -\d\beta  \end{pmatrix}\\ + \begin{pmatrix} 2(b_1c_2+b_2c_1)-(c_{12}+c_{21}) & 2(b_1c_1-b_2c_2)+\left(c_{22}-c_{11}\right)\\ 2(b_1c_1-b_2c_2)+\left(c_{22}-c_{11}\right) & 2(-b_1c_2-b_2c_1)+(c_{12}+c_{21})\end{pmatrix} \omega_1\wedge\omega_2,
\end{multline}
where we use the identity $\upsilon^*|\alpha|^2_g=4\left((c_1)^2+(c_2)^2\right)$. 

\subsection{A characterisation of twisted Weyl connections}

We obtain a natural differential operator $\mathrm{D}_{[g]}$ acting on the space $\mathfrak{A}(\Sigma)$ of torsion-free connections on $T\Sigma$
\[
\mathrm{D}_{[g]} : \mathfrak{A}(\Sigma) \to \Omega^2(\Sigma), \quad \nabla \mapsto \tr_g \mathrm{Ric}(\nabla)dA_g.
\]
Note that this operator does indeed only depend on the conformal equivalence class of $g$. A twisted Weyl connection $\nabla$ on $(\Sigma,[g])$ can be characterised by minimising the integral of $\mathrm{D}_{[g]}$ among its projective equivalence class $\mathfrak{p}(\nabla)$. 
\begin{ppn}\label{ppn:inftwistedconf}
Suppose $\nabla^{\prime}={}^{[g]}\nabla+\alpha$ is a twisted Weyl connection on the compact Riemann surface $(\Sigma,[g])$. Then
\[
\inf_{\nabla \in \mathfrak{p}(\nabla^{\prime})}\int_{\Sigma}\mathrm{D}_{[g]}(\nabla)=4\pi\chi(\Sigma)-\Vert\alpha\Vert^2_g
\]
and $4\pi\chi(\Sigma)-\Vert\alpha\Vert^2_g$ is attained precisely on $\nabla^{\prime}$. 
\end{ppn}
\begin{rmk}
Note that 
\[
\Vert \alpha\Vert^2_{g}=\int_{\Sigma}|\alpha|^2_g\,dA_g
\] does only depend on the conformal equivalence class of $g$. 
\end{rmk}
\begin{proof}[Proof of \cref{ppn:inftwistedconf}]
Write $\nabla^{\prime}={}^{(g,\beta)}\nabla+\alpha$ for some Riemannian metric $g \in [g]$, some $1$-form $\beta$ and some $\mathrm{End}(T\Sigma)$-valued $1$-form $\alpha$ on $\Sigma$ satisfying the properties of \cref{defn:twistconf}. From~\eqref{eq:curv2formex} and the definition of the Schouten tensor it follows that
\[
\upsilon^*\left(\tr_g\mathrm{Ric}(\nabla^{\prime})dA_g\right)=\Theta^1_2-\Theta^2_1,
\]
where $\Theta=(\Theta^i_j)$ denotes the curvature form of $\nabla^{\prime}$ pulled-back to $F_g$. Thus, equation \eqref{eq:curv2formex} gives
\begin{equation}\label{eq:pregbtwistweyl}
\tr_g\mathrm{Ric}(\nabla^{\prime})dA_g=2K_g+2\d\star_g\beta-|\alpha|^2_g dA_g
\end{equation}
and hence
\begin{equation}\label{eq:gbtwistedweyl}
\int_\Sigma\tr_g\mathrm{Ric}(\nabla^{\prime})dA_g=4\pi\chi(\Sigma)-\Vert\alpha\Vert^2_g
\end{equation}
by the Stokes and the Gauss--Bonnet theorem. 

It is a classical result due to Weyl~\cite{zbMATH02603060} that two torsion-free connections $\nabla^{1},\nabla^2$ on $T\Sigma$ are projectively equivalent if and only if there exists a $1$-form $\gamma$ on $\Sigma$ such that $\nabla^{1}-\nabla^2=\gamma\otimes \mathrm{Id}+\mathrm{Id}\otimes \gamma$. It follows that the connections in the projective equivalence class of $\nabla^{\prime}$ can be written as
\[
\nabla=\nabla^{\prime}+\gamma\otimes \mathrm{Id}+\mathrm{Id}\otimes \gamma
\]
with $\gamma \in \Omega^1(\Sigma)$. A simple computation gives
\begin{equation}\label{eq:ricprojchange}
\mathrm{Ric}(\nabla)=\mathrm{Ric}(\nabla^{\prime})+\gamma^2-\mathrm{Sym}\,\nabla^{\prime}\gamma+3\,\d\gamma,
\end{equation}
where $\mathrm{Sym} : \Gamma(T^*\Sigma\otimes T^*\Sigma) \to \Gamma(S^2(T^*\Sigma))$ denotes the natural projection. We compute
\begin{align*}
\tr_g \mathrm{Sym}\nabla^{\prime}\gamma\, dA_g&=\tr_g \mathrm{Sym}\left({}^g\nabla+g \otimes \beta^{\sharp}-\beta\otimes\mathrm{Id}-\mathrm{Id}\otimes \beta+\alpha\right)\gamma\,dA_g\\
&=\d\star_g\gamma+\left(2\gamma(\beta^{\sharp})-\gamma(\beta^{\sharp})-\gamma(\beta^{\sharp})\right)dA_g\\
&=\d\star_g\gamma,
\end{align*}
where we used that $\alpha(X)$ is trace-free and $[g]$-symmetric for all $X \in\Gamma(T\Sigma)$. Since the last summand of the right hand side of~\eqref{eq:ricprojchange} is anti-symmetric, we obtain
\begin{align*}
\int_\Sigma\tr_g \mathrm{Ric}(\nabla)dA_g&=\int_\Sigma\tr_g\mathrm{Ric}(\nabla)+\int_{\Sigma}\tr_g\gamma^2dA_g-\int_{\Sigma}\tr_g\mathrm{Sym}\nabla^{\prime}\gamma\, dA_g\\
&=4\pi\chi(\Sigma)-\Vert\alpha\Vert^2_g+\Vert \gamma\Vert^2_g-\int_{\Sigma}\d \star_g\gamma,
\end{align*} 
thus the claim follows from the Stokes theorem.  
\end{proof}

In~\cite{arXiv:1510.01043} the following result is shown, albeit phrased in different language:
\begin{ppn}\label{ppn:strucconsurf}
Let $(\Sigma,[g])$ be a Riemann surface. Then every torsion-free connection on $T\Sigma$ is projectively equivalent to a unique twisted $[g]$-Weyl connection.
\end{ppn}
Let $\mathfrak{P}(\Sigma)$ denote the space of projective structures on $\Sigma$. Using \cref{ppn:inftwistedconf} and \cref{ppn:strucconsurf} we immediately obtain: 
\begin{thm}\label{thm:minmax}
Let $(\Sigma,[g])$ be a compact Riemann surface. Then 
\[
\sup_{\mathfrak{p} \in \mathfrak{P}(\Sigma)}\inf_{\nabla \in \mathfrak{p}} \int_{\Sigma}\tr_g \mathrm{Ric}(\nabla)dA_g=4\pi\chi(\Sigma).
\]
\end{thm}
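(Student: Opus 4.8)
The plan is to reduce this double variational problem directly to the two preceding propositions, which between them completely control both the inner infimum and the parametrisation of the outer supremum. First I would fix the conformal structure $[g]$ and invoke Proposition~\ref{ppn:strucconsurf}: every projective structure $\mathfrak{p} \in \mathfrak{P}(\Sigma)$ contains a \emph{unique} twisted $[g]$-conformal connection $\nabla_{\mathfrak{p}} = {}^{[g]}\nabla + \alpha_{\mathfrak{p}}$. This sets up a well-defined correspondence $\mathfrak{p} \mapsto \alpha_{\mathfrak{p}}$ assigning to each projective structure the twist term of its distinguished representative, and by the Remark following Proposition~\ref{ppn:inftwistedconf} the quantity $\Vert \alpha_{\mathfrak{p}} \Vert^2_g$ depends only on $[g]$ and $\mathfrak{p}$, so it is a genuine function on $\mathfrak{P}(\Sigma)$.

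Next I would evaluate the inner infimum. By Proposition~\ref{ppn:inftwistedconf} applied to $\nabla_{\mathfrak{p}}$, the infimum of $\int_{\Sigma}\tr_g \mathrm{Ric}(\nabla)\,d\mu_g$ over the projective class $\mathfrak{p} = \mathfrak{p}(\nabla_{\mathfrak{p}})$ equals $4\pi\chi(\Sigma) - \Vert\alpha_{\mathfrak{p}}\Vert^2_g$ and is attained precisely on $\nabla_{\mathfrak{p}}$. Substituting this into the outer supremum gives
$$
\sup_{\mathfrak{p} \in \mathfrak{P}(\Sigma)}\inf_{\nabla \in \mathfrak{p}} \int_{\Sigma}\tr_g \mathrm{Ric}(\nabla)\,d\mu_g = 4\pi\chi(\Sigma) - \inf_{\mathfrak{p} \in \mathfrak{P}(\Sigma)} \Vert \alpha_{\mathfrak{p}}\Vert^2_g,
$$
so the whole problem collapses to minimising the single nonnegative quantity $\Vert \alpha_{\mathfrak{p}}\Vert^2_g$ over $\mathfrak{P}(\Sigma)$.

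Finally I would identify this minimum. Since $\Vert\alpha_{\mathfrak{p}}\Vert^2_g \geqslant 0$ for every $\mathfrak{p}$, the infimum is bounded below by $0$, which already yields the upper bound $\leqslant 4\pi\chi(\Sigma)$. For attainment it suffices to exhibit one projective structure whose twisted conformal representative has vanishing twist term, and this is immediate: any $[g]$-conformal connection ${}^{(g,\beta)}\nabla$ (for instance the Levi-Civita connection of a metric in $[g]$) is itself a twisted conformal connection with $\alpha = 0$, so by the uniqueness in Proposition~\ref{ppn:strucconsurf} the projective structure it determines has $\alpha_{\mathfrak{p}} = 0$ and hence $\Vert\alpha_{\mathfrak{p}}\Vert^2_g = 0$. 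Thus $\inf_{\mathfrak{p}} \Vert\alpha_{\mathfrak{p}}\Vert^2_g = 0$ and the claimed value $4\pi\chi(\Sigma)$ follows.

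Given how directly the two propositions assemble, I do not expect any genuine analytic obstacle here; the only point requiring a moment's care is the bookkeeping that the outer supremum over $\mathfrak{P}(\Sigma)$ is correctly rewritten as an infimum over twist terms — that is, confirming via Proposition~\ref{ppn:strucconsurf} that the map $\mathfrak{p}\mapsto\alpha_{\mathfrak{p}}$ is well defined and that the value $0$ lies in its image. All of the substantive work has already been absorbed into Proposition~\ref{ppn:inftwistedconf}, whose proof handled both the Gauss--Bonnet computation fixing the conformal-connection value and the projective variation eliminating the $\gamma$-direction.
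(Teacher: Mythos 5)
Your proposal is correct and follows exactly the route the paper takes: the paper derives Theorem~\ref{thm:minmax} immediately from Proposition~\ref{ppn:inftwistedconf} and Proposition~\ref{ppn:strucconsurf}, with the inner infimum evaluated as $4\pi\chi(\Sigma)-\Vert\alpha_{\mathfrak{p}}\Vert^2_g$ on the unique twisted conformal representative and the supremum attained at $\alpha_{\mathfrak{p}}=0$, e.g.\ on the projective class of a $[g]$-conformal connection. Your write-up merely makes explicit the bookkeeping (well-definedness of $\mathfrak{p}\mapsto\alpha_{\mathfrak{p}}$ and attainment of $0$) that the paper leaves implicit.
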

\begin{rmk}\label{rmk:alignedrep}
A twisted Weyl connection $\nabla$ on $(\Sigma,[g])$ defines an AH structure $(\mathfrak{p}(\nabla),[g])$ in the sense of~\cite{arXiv:0909.1897,MR3137456,MR3738964}, where $\mathfrak{p}(\nabla)$ denotes the projective equivalence class arising from $\nabla$. Moreover, the twisted Weyl connection agrees with the~\textit{aligned representative} of the associated AH structure $(\mathfrak{p}(\nabla),[g])$. In particular, the equations~\eqref{eq:pregbtwistweyl} and~\eqref{eq:gbtwistedweyl} have counterparts in the equations (5.8) and (7.12) of~\cite{MR3137456}. Also, the \cref{ppn:strucconsurf} corresponds to the existence of a unique aligned representative for an AH structure from~\cite{MR3137456}. 
\end{rmk}

\section{Submanifold theory of Lagrangian connections}

We now restrict attention to torsion-free connections on $T\Sigma$ having symmetric Ricci tensor, so that the zero-section $o : \Sigma \to T^*\Sigma$ is a Lagrangian submanifold. If furthermore $\nabla$ is timelike/spacelike, we obtain an induced metric $g=\mp o^*h_{\nabla}=\pm \mathrm{Ric}(\nabla)$ and the immersion $o : \Sigma \to T^*\Sigma$ has a well-defined normal bundle and second fundamental form. In particular, we want to compute when $o : \Sigma \to T^*\Sigma$ is~\textit{minimal}, that is, the trace with respect to $g$ of its second fundamental form vanishes identically. 

\subsection{Algebraic preliminaries}

Before we delve into the computations, we briefly review the relevant algebraic structure of the theory of oriented surfaces in an oriented Riemannian -- and oriented split-signature Riemannian four manifold $(M,g)$. We refer the reader to~\cite{MR679067} and~\cite{MR692106} for additional details.

First, let $X :\Sigma \to (M,g)$ be an immersion of an oriented surface $\Sigma$ into an oriented Riemannian $4$-manifold. The bundle of orientation compatible $g$-orthonormal coframes of $(M,g)$ is an $\mathrm{SO}(4)$-bundle $\pi : F^+_g \to M$. The Grassmannian $G^+_2(\R^4)$ of oriented $2$-planes in $\R^4$ is a homogeneous space for the natural action of $\mathrm{SO}(4)$ and the stabiliser subgroup is $\mathrm{SO}(2)\times \mathrm{SO}(2)$. Consequently, the pullback bundle $X^*F^+_g \to \Sigma$  admits a reduction $F_X\subset X^*F^+_g$ with structure group $\mathrm{SO}(2)\times \mathrm{SO}(2)$, where the fibre of $F_X$ at $p \in \Sigma$ consists of those coframes mapping the oriented tangent plane to $\Sigma$ at $X(p)$ to some fixed oriented $2$-plane in $\R^4$, while preserving the orientation.  

The second fundamental form of $X$ is a quadratic form on $T\Sigma$ with values in the rank two normal bundle of $X$. Therefore, it is represented by a map $F_X \to S^2(\R_2)\otimes \R^2$ which is equivariant with respect to some suitable representation of $\mathrm{SO}(2)\times \mathrm{SO}(2)$ on $S^2(\R_2)\times \R^2$. The relevant representation is defined by the rule
\[
\varrho(r_{\alpha},r_{\beta})(A)(x,y)=r_{-\beta}A(r_{\alpha}x,r_{\alpha}y), \quad x,y \in \R^2,
\]
where $A \in S^2(\R_2)\otimes \R^2$ is a symmetric bilinear form on $\R^2$ with values in $\R^2$ and $r_{\alpha},r_{\beta}$ denote counter-clockwise rotations in $\R^2$ by the angle $\alpha,\beta$, respectively. As usual, we decompose the $\mathrm{SO}(2)\times \mathrm{SO}(2)$-module $S^2(\R_2)\otimes \R^2$ into irreducible pieces. This yields 
\[\varrho=\varsigma_{0,-1}\oplus\varsigma_{2,1}\oplus\varsigma_{2,-1},
\]
where for $(n,m) \in \mathbb{Z}^2$ the complex one-dimensional $\mathrm{SO}(2)\times \mathrm{SO}(2)$-repre\-sen\-ta\-tion $\varsigma_{n,m}$ is defined by the rule 
\[
\varsigma_{n,m}(r_{\alpha},r_{\beta})=\e^{\i (n \alpha+m\beta)}. 
\]
Explicitly, the relevant projections $S^2(\R_2)\otimes \R^2 \to \C$ are
\begin{align}
\label{eq:defmeanvec} p_{0,-1}(A)&=\frac{1}{2}\left(A^1_{11}+A^1_{22}\right)+\frac{\i}{2}\left(A^2_{11}+A^2_{22}\right),\\
\label{eq:def10form}p_{2,-1}(A)&=\frac{1}{4}\left(A^1_{11}-A^1_{22}+2A^2_{12}\right)+\frac{\i}{4}\left(A^2_{11}-A^2_{22}-2A^1_{12}\right),\\
\label{eq:defcubicdiff}p_{2,1}(A)&=\frac{1}{4}\left(A^1_{22}-A^1_{11}+2A^2_{12}\right)+\frac{\i}{4}\left(A^2_{11}-A^2_{22}+2A^1_{12}\right)
\end{align}
and where we write $A(e_i,e_j)=A^k_{ij}e_k$ with respect to the standard basis $(e_1,e_2)$ of $\R^2$. 

The canonical bundle $K_{\Sigma}$ of $\Sigma$ with respect to the complex structure induced by the  metric $X^*g$ and orientation is the bundle associated to the representation $\varsigma_{1,0}$. Moreover, the conormal bundle, thought of as a complex line bundle, is the bundle $N^*_X$ associated to the representation $\varsigma_{0,1}$. Consequently, the second fundamental form of $X$ defines a section $H$ of the normal bundle which is the mean curvature vector of $X$, as well as a quadratic differential $Q_+$with values in the conormal bundle and a quadratic differential $Q_-$ with values in the normal bundle. Consequently, we obtain a quartic differential $Q_+Q_-$ on $\Sigma$ which turns out to be holomorphic, provided $X$ is minimal and $g$ has constant sectional curvature, see~\cite{MR692106}.

If we instead consider a split-signature oriented Riemannian $4$-manifold $(M,g)$, the bundle of orientation compatible $g$-orthonormal coframes of $(M,g)$ is an $\mathrm{SO}(2,2)$-bundle $\pi : F^+_g \to M$. Here, as usual, we take $\mathrm{SO}(2,2)$ to be the subgroup of $\mathrm{SL}(4,\R)$ stabilising the quadratic form
\[
q(x)=(x_1)^2+(x_2)^2-(y_1)^2-(y_2)^2, 
\]
where $(x,y) \in \R^{2,2}$. Now the action of $\mathrm{SO}(2,2)$ on the Grassmannian $G^+_2(\R^{2,2})$ of oriented $2$-planes in $\R^{2,2}$ is not transitive, it is however transitive on the open submanifolds of oriented timelike/spacelike $2$-planes. In both cases, the stabiliser subgroup is $\mathrm{SO}(2)\times\mathrm{SO}(2)$ as well. Therefore, the submanifold theory of a timelike/spacelike oriented surface in an oriented split-signature Riemannian four manifold is entirely analogous to the Riemannian case. In particular, we also encounter the mean curvature vector $H$ and the quadratic differentials $Q_{\pm}$. 

\subsection{The mean curvature form}

Knowing what to expect, we now carry out the submanifold theory of timelike/spacelike Lagrangian connections. 
Note however, that in addition to the split-signature metric $h_{\nabla}$, we also have a symplectic form $\Omega_{\nabla}$. The symplectic form allows to identify the conormal bundle to a Lagrangian spacelike/timelike immersion with the cotangent bundle of $\Sigma$. In particular we may think of the mean curvature vector $H$ as a $1$-form, the conormal-bundle valued quadratic differential $Q_+$ as a cubic differential and the normal-bundle valued quadratic differential $Q_-$ as a $(1,\! 0)$-form.

The product $P:=F\times \R_2$ is a principal right $\mathrm{GL}^+(2,\R)$-bundle over $T^*\Sigma$, where the $\mathrm{GL}^+(2,\R)$-right action is given by $(f,\xi)\cdot a=(a^{-1}\circ f,\xi a)$ for all $a \in \mathrm{GL}^+(2,\R)$ and $(f,\xi) \in P$. We define two $\R^2$-valued $1$-forms on $P$
\[
\rho:=\frac{1}{2}\left(\psi^t+\omega\right)\quad \text{and}\quad \zeta:=\frac{1}{2}\left(\psi^t-\omega\right),
\]
so that the metric $h_{\nabla}$ satisfies
\[
\pi^*h_{\nabla}=\rho^t\rho-\zeta^t\zeta=(\rho^1)^2+(\rho^2)^2-(\zeta^1)^2-(\zeta^2)^2.
\]
From the equivariance properties~\eqref{eq:equieta} and~\eqref{eq:equiomega} of $\psi$ and $\omega$, we compute
\begin{equation}\label{eq:cotangentbundelasso}
R_a^*\begin{pmatrix} \rho \\ \zeta \end{pmatrix}=\frac{1}{2}\begin{pmatrix} a^t+a^{-1} & a^t-a^{-1} \\ a^t-a^{-1} & a^t+a^{-1}\end{pmatrix}\begin{pmatrix} \rho \\ \zeta \end{pmatrix}.
\end{equation}
\begin{rmk}
The reader may easily verify that the representation $\mathrm{GL}^+(2,\R) \to \mathrm{GL}(4,\R)$ defined by~\eqref{eq:cotangentbundelasso} embeds $\mathrm{GL}^+(2,\R)$ as a subgroup of $\mathrm{SO}(2,2)$. 
\end{rmk}
Recall that the Lie algebra of the split-orthogonal group $\mathrm{O}(2,2)$ consists of matrices of the form
\[
\begin{pmatrix} \mu & \nu \\ \nu^t & \vartheta \end{pmatrix}
\]
where $\mu$ and $\vartheta$ are skew-symmetric. Consequently, there exist unique $\mathfrak{o}(2)$-valued $1$-forms $\mu,\vartheta$ on $F\times \R_2$ and a unique $\mathfrak{gl}(2,\R)$-valued $1$-form $\nu$ on $F\times \R_2$ such that
\begin{equation}\label{eq:struceqriemmetri}
\d \begin{pmatrix} \rho \\ \zeta\end{pmatrix}=-\begin{pmatrix} \mu & \nu \\ \nu^t & \vartheta \end{pmatrix}\wedge \begin{pmatrix} \rho \\ \zeta\end{pmatrix}.
\end{equation}
In order to compute these connection forms we first remark that since the function $S=(S_{ij})$ represents the (symmetric) Ricci tensor of $\nabla$, there must exist unique real-valued functions $S_{ijk}=S_{jik}$ on $F$ so that
\[
\d S_{ij}=S_{ijk}\omega^k+S_{ik}\theta^k_j+S_{kj}\theta^k_i. 
\]
Clearly, the function $(S_{ijk}) : F \to S^2(\R_2)\otimes \R_2$ represents $\nabla\, \mathrm{Ric}(\nabla)$ with $k$ being the derivative index. 
\begin{lem}\label{lem:conforms}
We have
\begin{align*}
\mu_{ij}&=-\xi_{[i}\omega_{j]}+\theta_{[ij]}-S_{k[ij]}\omega^k,\\
\nu_{ij}&=-\xi_k\omega_k\delta_{ij}-\xi_{(i}\omega_{j)}-\theta_{(ij)}+S_{k[ij]}\omega_k,\\
\vartheta_{ij}&=-\xi_{[i}\omega_{j]}+\theta_{[ij]}+S_{k[ij]}\omega^k\omega_k,
\end{align*}
where we write $\omega_i=\delta_{ij}\omega^j$ and $\theta_{ij}=\delta_{ik}\theta^k_j$. 
\end{lem}
\begin{proof}
Since the connection forms are unique, the proof amounts to plugging the above formulae into the structure equations~\eqref{eq:struceqriemmetri} and verify that they are satisfied. This is tedious, but an elementary computation and hence is omitted. 
\end{proof}

Recall that the components of $\psi$ and $\omega$ -- and hence equivalently the components of $\rho$ and $\zeta$ -- span the $1$-forms on $P$ that are semi-basic for the projection $\pi : P \to T^*\Sigma$. In particular, if $\epsilon$ is a $1$-form on $T^*\Sigma$, then there exists a unique map $(e_1,e_2) : P \to \R_{2,2}$ so that $\pi^*\epsilon=e_1\rho+e_2\zeta$. Since $\pi^*\epsilon$ is invariant under the $\mathrm{GL}^+(2,\R)$ right action, the function $(e_1,e_2)$ satisfies the equivariance property determined by~\eqref{eq:cotangentbundelasso}. Phrased differently, the cotangent bundle of $T^*\Sigma$ is the bundle associated to $\pi : P\to T^*\Sigma$ via the representation $\varrho : \mathrm{GL}^+(2,\R) \to \R_{2,2}$ defined by the rule
\begin{equation}\label{eq:pullbackbundledoublecotangent}
\varrho(a)\begin{pmatrix} \xi_1 & \xi_2\end{pmatrix}=\begin{pmatrix} \xi_1 & \xi_2\end{pmatrix}\frac{1}{2}\begin{pmatrix} a^t+a^{-1} & a^t-a^{-1} \\ a^t-a^{-1} & a^t+a^{-1}\end{pmatrix}
\end{equation}
for all $a \in \mathrm{GL}^+(2,\R)$ and $(\xi_1,\xi_2)$ in $\R_{2,2}$. 

We will next use this fact to exhibit the conormal bundle of the immersion $o : \Sigma \to T^*\Sigma$ as an associated bundle to a natural reduction of the pullback bundle $o^*P$. Note that by construction, the pullback bundle $o^*P \to \Sigma$ is just the frame bundle $\upsilon : F \to \Sigma$ and that on $o^*P\simeq F$ we have $\psi^t=-S\omega$, thus
\[
\rho=\frac{1}{2}\left(\mathrm{I}_2-S\right)\omega \quad \text{and}\quad \zeta=-\frac{1}{2}\left(\mathrm{I}_2+S\right)\omega. 
\] If we assume that $\nabla$ is timelike/spacelike, then the Ricci tensor of $\nabla$ is positive/negative definite and hence the equations $S=\pm \mathrm{I}_2$ define a reduction $F_{\nabla} \to \Sigma$ with structure group $\mathrm{SO}(2)$ whose basepoint projection we continue to denote by $\upsilon$. Note that by construction, $F_{\nabla} \to \Sigma$ is the bundle of orientation preserving orthonormal coframes of the induced metric $g=\pm \mathrm{Ric}(\nabla)$. In particular, from~\eqref{eq:pullbackbundledoublecotangent} we see that the pullback bundle $o^*(T^*(T^*\Sigma))$ is the bundle associated to $\upsilon : F_{\nabla} \to \Sigma$ via the $\mathrm{SO}(2)$-representation on $\R_{2,2}$ defined by the rule
\begin{equation}\label{eq:pullbackcotangent}
\varrho(a)\begin{pmatrix} \xi_1 & \xi_2\end{pmatrix}=\begin{pmatrix} \xi_1 a^t & \xi_2 a^t\end{pmatrix}
\end{equation}
for all $a \in \mathrm{SO}(2)$ and $(\xi_1,\xi_2)$ in $\R_{2,2}$. Furthermore, on $F_{\nabla}$ we obtain $\psi=\mp \omega^t$ and hence 
\begin{equation}\label{eq:frameadapttimelike}
\begin{pmatrix} \rho \\ \zeta \end{pmatrix}=\begin{pmatrix} 0\\ -\omega \end{pmatrix}
\end{equation}
in the timelike case and  
\begin{equation}\label{eq:frameadaptspacelike}
\begin{pmatrix} \rho \\ \zeta \end{pmatrix}=\begin{pmatrix} \omega\\ 0 \end{pmatrix}
\end{equation}
in the spacelike case. Recall that if $\alpha$ is a $1$-form on $\Sigma$ then there exists a unique $\R_2$-valued function $a$ on $F$ -- and hence on $F_{\nabla}$ as well -- so that $\upsilon^*\alpha=a\omega$. It follows as before that $T^*\Sigma$ is the bundle associated to $F_{\nabla}$ via the $\mathrm{SO}(2)$-representation defined by the rule
\begin{equation}\label{eq:repcotangentbundle}
\varrho(a)(\xi)=\xi a^t
\end{equation}  
for all $a \in \mathrm{SO}(2)$ and $\xi \in \R_2$. Using~\eqref{eq:pullbackcotangent},~\eqref{eq:frameadapttimelike} and~\eqref{eq:frameadaptspacelike}, we see that the conormal bundle 
\[
N^*_o:=o^*(T^*(T^*\Sigma))/T^*\Sigma
\] 
of $o$ is (isomorphic to) the bundle associated to $F_{\nabla}$ via the representation~\eqref{eq:repcotangentbundle} as well. We thus have an isomorphism $N^*_o\simeq T^*\Sigma$ between the conormal bundle of the immersion $o : \Sigma \to T^*\Sigma$ and the cotangent bundle $T^*\Sigma$. Of course, the metric $g$ on $\Sigma$ provides an isomorphism $T^*\Sigma\simeq T\Sigma$ and hence $N_o\simeq T^*\Sigma$, where $N_o$ denotes the normal bundle of $o$. The second fundamental form of $o$ is a quadratic form on $T\Sigma$ with values in the normal bundle, thus here naturally a section of $S^2(T^*\Sigma)\otimes T^*\Sigma$. 

\begin{lem}
Let $o : \Sigma \to T^*\Sigma$ be timelike/spacelike. Then the second fundamental form $A$ of $o$ is represented by the functions $A_{ijk}=A_{ikj}$, where 
\begin{equation}\label{eq:2ndfundform}
A_{ijk}=\mp\frac{1}{2}\left(S_{kji}-S_{ijk}-S_{ikj}\right).
\end{equation}
\end{lem} 
\begin{proof}
We will only treat the spacelike case, the timelike case is entirely analogous up to some sign changes. In our frame adaption on $F_{\nabla}$ we have $\zeta=0$ and $\rho=\omega$. Consequently, 
\[
0=\d \zeta=-\nu^t\wedge\rho-\vartheta\wedge\zeta=-\nu^t\wedge\omega
\]
or in components
\[
0=\nu_{ji}\wedge\omega_j.
\]
Cartan's lemma implies that there exist unique real-valued functions $A_{ijk}=A_{ikj}$ on $F_{\nabla}$ so that
\[
\nu_{ji}=A_{ijk}\omega_k
\]
and by standard submanifold theory the functions $A_{ijk}$ represent the second fundamental form of $o$. In order to compute the functions $A_{ijk}$, we use that in our frame adaption $S_{ij}=-\delta_{ij}$ and hence
\[
0=\d S_{ij}=S_{ijk}\omega_k-\delta_{ik}\theta^k_j-\delta_{kj}\theta^k_i=S_{ijk}\omega_k-2\theta_{(ij)}.
\]
Since $\xi=0$ we thus get from \cref{lem:conforms}
\[
\nu_{ji}=-\theta_{(ji)}+S_{k[ji]}\omega_k=\left(-\frac{1}{2}S_{ijk}+\frac{1}{2}S_{kji}-\frac{1}{2}S_{ikj}\right)\omega_k,
\]
and the claim follows.   
\end{proof}
Denoting by $S^{ij}$ the functions on $F$ representing the inverse of the Ricci curvature of $\nabla$, so that $S^{ij}S_{jk}=\delta^i_k$, we thus have:
\begin{thm}\label{thm:minchar}
A timelike/spacelike Lagrangian connection $\nabla$ is minimal if and only if
\begin{equation}\label{eq:minpde}
S^{ij}\left(2S_{kij}-S_{ijk}\right)=0. 
\end{equation}
\end{thm}
\begin{proof}
By standard submanifold theory, the immersion $o : \Sigma \to T^*\Sigma$ is minimal if and only if the trace of the second fundamental form with respect to the induced metric $g=\pm \mathrm{Ric}(\nabla)$ vanishes identically. 
\end{proof}
\begin{rmk}
Note that in index notation the minimality condition~\eqref{eq:minpde} is equivalent to
\[
\eta_k:=\frac{1}{2}R^{ij}\left(2\nabla_iR_{jk}-\nabla_k R_{ij}\right)=0,
\]
where $R_{ij}$ denotes the Ricci tensor of $\nabla$ and $R^{ij}$ its inverse. We call the $1$-form $\eta$ the~\textit{mean curvature form} of $\nabla$. 
\end{rmk}
\begin{ex}\label{ex:posmetr}
Let $(\Sigma,g)$ be a two-dimensional Riemannian manifold. The Levi-Civita connection $\nabla$ of $g$ has Ricci tensor $\mathrm{Ric}(g)=Kg$, where $K$ denotes the Gauss curvature of $g$. Thus, if $K$ is positive/negative, then $\nabla$ is a timelike/spacelike Lagrangian connection and \cref{thm:minchar} immediately implies that $\nabla$ is minimal. In fact, we will show later (\cref{ppn:class2sphere}) that on the $2$-sphere metrics of positive Gauss curvature are the only examples of minimal Lagrangian connections. 
\end{ex}
Recall that a (non-degenerate) submanifold of a (pseudo-)Riemannian manifold is called~\textit{totally geodesic} if its second fundamental form vanishes identically. We call a timelike/spacelike connection $\nabla$~\textit{totally geodesic} if $o : \Sigma \to (T^*\Sigma,h_{\nabla})$ is a totally geodesic submanifold. We also get:
\begin{cor}
Let $\nabla$ be a timelike/spacelike Lagrangian connection. Then $\nabla$ is totally geodesic if and only if its Ricci tensor is parallel with respect to $\nabla$. 
\end{cor}
\begin{proof}
The second fundamental form vanishes identically if and only if
\[
S_{jki}=S_{ijk}-S_{ikj}.
\]
Since the Ricci tensor is symmetric, the left hand side is symmetric in $j,k$, but the right hand side is anti-symmetric in $j,k$, thus $S_{kji}$ vanishes identically. 
\end{proof}

\subsection{Minimality and the Liouville curvature}

The minimality condition for a timelike/spacelike Lagrangian connection can also be expressed in terms of the Liouville curvature of $\nabla$. To this end we decompose the structure equation\footnote{We define $\eps=(\eps_{ij})$ by $\eps_{ij}=-\eps_{ji}$ with $\eps_{12}=1$ and $\eps^{ij}$ denote the components of the transpose inverse of $\eps$.}
\begin{equation}\label{eq:struceqric1}
\d S_{ij}=S_{ijk}\omega^k+S_{ik}\theta^k_j+S_{kj}\theta^k_i
\end{equation}
into (we compute modulo $\theta^i_j$)
\begin{align*}
\d S_{ij}&=\left(\frac{1}{3}(S_{ijk}+S_{ikj}+S_{jki})+\frac{2}{3}S_{ijk}-\frac{1}{3}(S_{ikj}+S_{jki})\right)\omega_k\\
&=\left(R_{ijk}+\frac{2}{3}L_{(i}\eps_{j)k}\right)\omega^k,
\end{align*}
where we define
\[
R_{ijk}=\frac{1}{3}\left(S_{ijk}+S_{ikj}+S_{jki}\right)
\] and 
\[
L_i=\frac{2}{3}\eps^{jk}\left(S_{ijk}-\frac{1}{2}(S_{ikj}+S_{jki})\right).
\]
\begin{rmk}\label{rmk:lioucurvequi}
The equivariance properties of the function $S=(S_{ij})$ yield $R_a^*L=La \det a$, where we write $L=(L_i)$. Since 
\[
R_a^*\left(\omega^1\wedge\omega^2\right)=(\det a^{-1})\omega^1\wedge\omega^2,
\]
it follows that the there exists a unique $1$-form $\lambda(\nabla)$ on $\Sigma$ taking values in $\Lambda^2(T^*\Sigma)$, such that 
\[
\upsilon^*\lambda(\nabla)=\left(L_1\omega^1+L_2\omega^2\right)\otimes \omega^1\wedge\omega^2.
\]
The $\Lambda^2(T^*\Sigma)$-valued $1$-form was discovered by R.~Liouville and hence we call it the Liouville curvature of $\nabla$. Liouville showed that the vanishing of $\lambda(\nabla)$ is the complete obstruction to $\nabla$ being projectively flat.  
\end{rmk}

Writing $g=\pm \mathrm{Ric}(\nabla)$ for the induced metric, we define $\beta \in \Omega^1(\Sigma)$ by
\[
\beta=\frac{3}{8}\tr_g \mathrm{Sym}\nabla g, 
\]
where $\mathrm{Sym} : \Gamma\left(T^*\Sigma\otimes S^2(T^*\Sigma)\right) \to \Gamma\left(S^3(T^*\Sigma)\right)$ denotes the natural projection. We have:
\begin{ppn}\label{ppn:mincondcon}
A timelike/spacelike Lagrangian connection $\nabla$ on $T\Sigma$ is minimal if and only if 
\begin{equation}\label{eq:mincon}
\lambda(\nabla)=\mp \, 2\star_g\!\beta\otimes dA_g.
\end{equation}
\end{ppn}
\begin{proof}
In order to prove the claim we work on the orthonormal coframe bundle of $g$ which is cut out of the coframe bundle $F$ by the equations $S_{ij}=\pm\delta_{ij}$. By definition, the functions $S_{ijk}$ represent $\nabla\, \mathrm{Ric}(\nabla)$ and hence the functions $R_{ijk}$ represent $\pm\mathrm{Sym}\nabla g$. Therefore, on $F_g$, writing $\upsilon^*\beta=b_i\omega_i$, the components $b_i$ of $\beta$ are
\begin{equation}\label{eq:defbeta}
b_k=\pm\frac{3}{8}\delta^{ij}R_{ijk}.
\end{equation}
Now on $F_g$ the equation~\eqref{eq:mincon} becomes
\[
\left(L_1\omega_1+L_2\omega_2\right)\otimes\omega_1\wedge\omega_2=\mp 2\left(-b_2\omega_1+b_1\omega_2\right)\otimes\omega_1\wedge\omega_2
\]
which is equivalent to 
\[
L_1=\frac{3}{4}(R_{112}+R_{222})\quad \text{and}\quad L_2=-\frac{3}{4}\left(R_{111}+R_{221}\right),
\]
where we have used that $\upsilon^*(\star_g\beta)=-b_2\omega_1+b_1\omega_2$ as well as $\upsilon^*dA_g=\omega_1\wedge\omega_2$ and~\eqref{eq:defbeta}. On the other hand \cref{thm:minchar} implies that the minimality is equivalent to
\begin{align*}
\delta^{ij}\left(2S_{kij}-S_{ijk}\right)&=\delta^{ij}R_{ijk}+\delta^{ij}\left(\frac{4}{3}L_{(k}\eps_{i)j}-\frac{2}{3}L_{(i}\eps_{j)k}\right)\\
&=\delta^{ij}R_{ijk}+\delta^{ij}\left(\frac{2}{3}(L_k\eps_{ij}+L_i\eps_{kj})-\frac{1}{3}(L_i\eps_{jk}+L_{j}\eps_{ik})\right)\\
&=\delta^{ij}R_{ijk}-\frac{4}{3}\delta^{ij}L_i\eps_{jk}=0.
\end{align*}
Written out, this gives the two conditions
\begin{equation}\label{eq:minimality}
R_{111}+R_{221}+\frac{4}{3}L_2=R_{112}+R_{222}-\frac{4}{3}L_1=0,
\end{equation}
which proves the claim.
\end{proof}
\begin{rmk}\label{rmk:liouvcurv}
\cref{ppn:mincondcon} shows that a timelike/spacelike minimal Lagrangian connection is projectively flat if and only if the $1$-form $\beta$ vanishes identically. 
\end{rmk}

\section{Minimal Lagrangian connections}\label{sec:minlag}

We will next compute the structure equations of a timelike/spacelike minimal Lagrangian connection $\nabla$. As before, we work on the orthonormal coframe bundle $F_g$ of the induced metric $g=\pm\mathrm{Ric}(\nabla)$. The submanifold theory discussed in \S 3 tells us that the second fundamental form of $o : \Sigma \to (T^*\Sigma,h_{\nabla})$ is described in terms of a cubic differential and a $(1,\! 0)$-form. Using the definition~\eqref{eq:def10form} of the $(1,\! 0)$-form, the expression~\eqref{eq:2ndfundform} for the second fundamental form and the minimality conditions~\eqref{eq:minimality}, one easily computes that on $F_g$ the $(1,\! 0)$-form is represented by the complex-valued function
\[
b=\pm\frac{3}{8}\left((R_{111}+R_{221})-\i(R_{112}+R_{222})\right).
\]
Hence comparing with~\eqref{eq:defbeta}, we conclude that $b=b_1-\i b_2$. Note that if we define the $(1,\! 0)$-form 
\[
\beta^{1,0}:=\beta+\i\star_g\beta,
\]
then we have $\upsilon^*\beta^{1,0}=(b_1-\i b_2)(\omega_1+\i \omega_2)$, thus the $(1,\! 0)$-form obtained from the normal bundle valued quadratic differential $Q_-$ by using the symplectic form $\Omega_{\nabla}$ is $\beta^{1,0}$. Likewise, $Q_+$ gives the cubic differential $C$ on $\Sigma$ which is represented on $F_g$ by the complex-valued function
\[ 
c=\mp\frac{1}{8}\left(\left(R_{111}-3R_{122}\right)+\i\left(-3R_{112}+R_{222} \right)\right).
\]
From \cref{lem:cubisymtrcor3} and 
\[
\upsilon^*\left(\mathrm{Sym}_0\mathrm{Ric}(\nabla)\right)=\left(R_{ijk}-\frac{3}{2}\delta_{(ij}R_{k)lm}\delta^{lm}\right)\omega_i\otimes\omega_j\otimes \omega_k
\]
we easily compute that the cubic differential $C$ satisfies $\Re(C)=\mp\frac{1}{2}\mathrm{Sym}_0\nabla g$, where the subscript $0$ denotes the trace-free part with respect to $g$. 

The structure equations can now be summarised as follows:
\begin{ppn}\label{ppn:struceqstand}
Let $\Sigma$ be an oriented surface and $\nabla$ a timelike/space\-like minimal Lagrangian connection on $T\Sigma$. Then we obtain a triple $(g,\beta,C)$ on $\Sigma$ consisting of a Riemannian metric $g=\pm\mathrm{Ric}(\nabla)$, a $1$-form $\beta=\frac{3}{8}\tr_g \mathrm{Sym}\nabla g$ and a cubic differential $C$ so that $\mathrm{Re}(C)=\mp\frac{1}{2}\mathrm{Sym}_0\nabla g$. Furthermore, the triple $(g,\beta,C)$ satisfies the following equations
\begin{align}
\label{eq:mainpde}K_g&=\pm 1+2\,|C|_g^2+\delta_g\beta,\\
\label{eq:weakhol}\ov{\partial} C&=\left(\beta-\i\star_g \beta\right)\otimes C,\\
\label{eq:closedness}\d \beta&=0.
\end{align}
\end{ppn}  
\begin{proof}
In our frame adaption where $S_{ij}=\pm\delta_{ij}$ on $F_g$, we obtain from~\eqref{eq:struceqric1}
\[
0=\d S_{ij}=\left(R_{ijk}+\frac{2}{3}L_{(i}\eps_{j)k}\right)\omega_k\pm\delta_{ik}\theta^k_j\pm\delta_{kj}\theta^k_i.
\]
Therefore, writing $\theta_{ij}=\delta_{ik}\theta^k_j$, we have
\begin{equation}\label{eq:conformsubbundle}
\theta_{(ij)}=\mp\frac{1}{2}\left(R_{ijk}+\frac{2}{3}L_{(i}\eps_{j)k}\right)\omega_k.
\end{equation}
For later usage we introduce the notation $c_1=\mp(\frac{1}{8}R_{111}-\frac{3}{8}R_{122})$ and $c_2=\mp(-\frac{3}{8}R_{112}+\frac{1}{8}R_{222})$, so that $c=c_1+\i c_2$. Equation~\eqref{eq:conformsubbundle} written out gives
\begin{align*}
\theta_{11}&=\mp\frac{1}{2}R_{111}\omega_1\mp\left(\frac{3}{4}R_{112}+\frac{1}{4}R_{222}\right)\omega_2,\\
\frac{1}{2}(\theta_{12}+\theta_{21})&=\mp\left(\frac{3}{8}R_{112}-\frac{1}{8}R_{222}\right)\omega_1\mp\left(\frac{3}{8}R_{122}-\frac{1}{8}R_{111}\right)\omega_2,\\
\theta_{22}&=\mp\left(\frac{3}{4}R_{122}+\frac{1}{4}R_{111}\right)\omega_1\mp\frac{1}{2}R_{222}\omega_2.\\
\end{align*}
Defining 
\[
\varphi=\theta_{21}\mp\frac{1}{2}R_{222}\omega_1\pm\left(\frac{1}{4}R_{111}+\frac{3}{4}R_{122}\right)\omega_2,
\] we compute 
\begin{equation}\label{eq:twistconfcon}
\theta=\begin{pmatrix} -\beta & \star_g \beta-\varphi \\ \varphi-\star_g\beta & -\beta\end{pmatrix}+\begin{pmatrix} c_1\omega^1-c_2\omega^2 & -c_2\omega^1-c_1\omega^2\\ -c_2\omega^1-c_1\omega^2 & -c_1\omega^1+c_2 \omega^2\end{pmatrix}.
\end{equation}
The motivation for the definition of $\varphi$ is that we have
\[
\d\omega_1=-\omega_2\wedge\varphi\quad \text{and}\quad \d\omega_2=-\varphi\wedge\omega_1, 
\]
hence $\varphi$ is the Levi-Civita connection form of $g$. In particular, we see that timelike/spacelike minimal Lagrangian connections are twisted Weyl connections. Since $\mathrm{Ric}(\nabla)=\pm g$, it follows that the curvature $2$-form of $\theta$ must satisfy 
\begin{equation}\label{eq:curvcon}
\Theta=\d\theta+\theta\wedge\theta=\begin{pmatrix} 0 & \pm\omega_1\wedge\omega_2 \\ \mp\omega_1\wedge\omega_2 & 0\end{pmatrix}.
\end{equation}
In order to evaluate this condition we first recall that we write $\upsilon^*\beta=b_i\omega_i$ and 
\begin{align*}
\d b_1&=b_{11}\omega_1+b_{12}\omega_2+b_2\varphi,\\
\d b_2&=b_{21}\omega_1+b_{22}\omega_2-b_1\varphi,
\end{align*}
for unique real-valued functions $b_{ij}$ on $F_g$. From~\eqref{eq:twistconfcon} and~\eqref{eq:curvcon} we obtain
\[
\d \beta=-\frac{1}{2}\left(\d\theta_{11}+\d\theta_{22}\right)=\frac{1}{2}\left(\theta_{12}\wedge\theta_{21}+\theta_{21}\wedge\theta_{12}\right)=0
\]
showing that $\beta$ is closed, hence~\eqref{eq:closedness} is verified. Likewise, we also obtain
\begin{align*}
\d \varphi&=\frac{1}{2}(\d\theta_{21}-\d\theta_{12})+\d\! \star_g\!\beta\\
&=(b_{11}+b_{22})\omega_1\wedge\omega_2+\frac{1}{2}\left((\theta_{11}-\theta_{22})\wedge(\theta_{21}+\theta_{12})\right)\mp \omega_1\wedge\omega_2\\
&=-\left(2\left((c_1)^2+(c_2)^2\right)-(b_{11}+b_{22})\pm 1\right)\omega_1\wedge\omega_2.
\end{align*}
Writing $K_g$ for the Gauss curvature of $g$, this last equation is equivalent to
\[
K_g=\pm 1+2\,|C|_g^2+\delta_g\beta,
\]
which verifies~\eqref{eq:mainpde}. 

In order to prove~\eqref{eq:weakhol}, we use 
\[
\upsilon^*\left(\beta-\i\star_g\beta\right)=(b_1+\i b_2)(\omega^1-\i \omega^2).
\]
In light of~\eqref{eq:delbarframe} the condition~\eqref{eq:weakhol} is equivalent to the condition
\begin{equation}\label{eq:weakholcplxnot}
\d c\wedge\omega=\ov{b}c\ov{\omega}\wedge\omega+3\i c\varphi\wedge\omega, 
\end{equation}
where we use the complex notation $b=b_1-\i b_2$, $c=c_1+\i c_2$ and $\omega=\omega_1+\i\omega_2$. Again, from~\eqref{eq:twistconfcon} we compute
\[
c\omega=\frac{1}{2}\left[(\theta_{11}-\theta_{22})-\i\left(\theta_{12}+\theta_{21}\right)\right],
\]
hence 
\begin{multline*}
\d c\wedge\omega=\d(c\omega)-c\d\omega=-\theta_{12}\wedge\theta_{21}\\
+\frac{\i}{2}\left(\theta_{11}\wedge(\theta_{12}-\theta_{21})+\theta_{22}\wedge(\theta_{21}-\theta_{12})\right)-(c_1+\i c_2)(\d \omega_1+\i \d\omega_2).
\end{multline*}
Using~\eqref{eq:twistconfcon} and the structure equations~\eqref{eq:struceqriemcon} this gives
\begin{multline*}
\d c\wedge\omega=3c_2\omega_1\wedge\varphi+3c_1\omega_2\wedge\varphi-2(b_1c_2+b_2c_1)\omega_1\wedge\omega_2\\
+\i\left(-3c_1\omega_1\wedge\varphi+3c_2\omega_2\wedge\varphi+2(b_1c_1-b_2c_2)\omega_1\wedge\omega_2\right),
\end{multline*}
which is equivalent to
\begin{multline*}
\d c\wedge\omega=(b_1+\i b_2)(c_1+\i c_2)(\omega_1-\i\omega_2)\wedge(\omega_1+\i\omega_2)\\
+3\i(c_1+\i c_2)\varphi\wedge(\omega_1+\i\omega_2),
\end{multline*}
that is, equation~\eqref{eq:weakholcplxnot}. This completes the proof. 
\end{proof}
Conversely, unravelling our computations backwards, we also get:
\begin{ppn}
Suppose a triple $(g,\beta,C)$ on an oriented surface $\Sigma$ satisfies the equations \eqref{eq:mainpde},\eqref{eq:weakhol},\eqref{eq:closedness}. Then the connection form~\eqref{eq:twistconfcon} on $F_g$ defines a timelike/spacelike minimal Lagrangian connection $\nabla$ on $T\Sigma$ with $\mathrm{Ric}(\nabla)=\pm g$.   
\end{ppn}
We immediately obtain:
\begin{cor}\label{cor:cortriplecon}
Let $\Sigma$ be an oriented surface. Then there exists a one-to-one correspondence between timelike/spacelike minimal Lagrangian connections on $T\Sigma$ and triples $(g,\beta,C)$ satisfying \eqref{eq:mainpde},\eqref{eq:weakhol},\eqref{eq:closedness}. 
\end{cor}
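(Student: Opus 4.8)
The plan is to assemble the corollary directly from Proposition~\ref{ppn:struceqstand} and the converse Proposition established just above it. Proposition~\ref{ppn:struceqstand} supplies a map $\Phi$ sending a torsion-free timelike/spacelike minimal Lagrangian connection $\nabla$ to the triple $(g,\beta,C)=\bigl(\pm\mathrm{Ric}(\nabla),\tfrac{3}{8}\tr_g\mathrm{Sym}\nabla g,C\bigr)$, and it guarantees that this triple solves~(\ref{eq:mainpde},\ref{eq:weakhol},\ref{eq:closedness}). The converse Proposition supplies a map $\Psi$ in the opposite direction, sending a solution triple $(g,\beta,C)$ to the connection $\nabla$ whose connection form on $F_g$ is~\eqref{eq:twistconfcon}, with $\mathrm{Ric}(\nabla)=\pm g$. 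In particular the sign---timelike versus spacelike---is respected in both directions, so it suffices to fix one sign and verify that $\Phi$ and $\Psi$ are mutually inverse.

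For the composition $\Psi\circ\Phi$, I would begin with a connection $\nabla$, extract $(g,\beta,C)=\Phi(\nabla)$, and note that the computation already carried out in the proof of Proposition~\ref{ppn:struceqstand} exhibits the connection form of $\nabla$, pulled back to $F_g$, as being \emph{exactly}~\eqref{eq:twistconfcon} for this very data. It then remains to invoke the fact that a torsion-free connection on $T\Sigma$ is determined by the restriction of its connection form to $F_g$: the $\mathrm{GL}^+(2,\R)$-orbit of $F_g$ exhausts $F$, since every orientation-preserving coframe is $\mathrm{GL}^+(2,\R)$-equivalent to a $g$-orthonormal one, and the transformation law $R_a^*\theta=\mathrm{Ad}(a^{-1})\theta+a^{-1}\d a$ propagates $\theta$ from $F_g$ to all of $F$. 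Hence $\Psi(\Phi(\nabla))=\nabla$.

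For the reverse composition $\Phi\circ\Psi$, I would fix a solution triple $(g,\beta,C)$, form $\nabla=\Psi(g,\beta,C)$ with connection form~\eqref{eq:twistconfcon}, and re-extract its invariants. The identity $\mathrm{Ric}(\nabla)=\pm g$ returns $g$ on the nose. To recover $\beta$ and $C$ one reads the symmetric part $\theta_{(ij)}$ of~\eqref{eq:twistconfcon} against~\eqref{eq:conformsubbundle}, which expresses the functions $R_{ijk}$ and $L_i$ as explicit linear combinations of the representatives $b_i$ of $\beta$ and $c_i$ of $C$; the formulas $b_k=\pm\tfrac{3}{8}\delta^{ij}R_{ijk}$ together with $c_1=\mp(\tfrac{1}{8}R_{111}-\tfrac{3}{8}R_{122})$ and $c_2=\mp(-\tfrac{3}{8}R_{112}+\tfrac{1}{8}R_{222})$ then invert this linear relation and return the original $\beta$ and $C$. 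Thus $\Phi\circ\Psi=\mathrm{id}$ as well, and $\Phi,\Psi$ are inverse bijections.

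The substantive content sits entirely in the two propositions, so the single genuine point to nail down is the claim that a torsion-free connection is recovered from the restriction of its connection form to $F_g$; everything else is the bookkeeping that the passage $(g,\beta,C)\leftrightarrow\theta_{(ij)}$ is a linear isomorphism. I expect the main---though still routine---obstacle to be keeping the $\pm$ conventions and the orthonormal-frame index manipulations consistent across the two directions, precisely so that the re-extracted cubic differential and $1$-form match the ones fed in.
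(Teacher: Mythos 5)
Your proposal is correct and takes essentially the same route as the paper: the correspondence is assembled from Proposition~\ref{ppn:struceqstand} and its converse, with uniqueness coming from $g=\pm\mathrm{Ric}(\nabla)$ and the fact that $\beta$ and $C$ are recovered linearly from $\nabla\,\mathrm{Ric}(\nabla)$ once $g$ is fixed. The paper compresses your verification of $\Psi\circ\Phi=\mathrm{id}$ into the word ``clearly,'' so your explicit observation that a torsion-free connection is determined by its connection form restricted to $F_g$ (via $\mathrm{GL}^+(2,\R)$-equivariance) merely makes precise a point the paper leaves implicit.
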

\begin{proof}
Clearly, the map sending a torsion-free minimal Lagrangian connection $\nabla$ into the set of triples $(g,\beta,C)$ satisfying the above structure equations, is surjective. Now suppose the two triples $(g_1,\beta_1,C_1)$ and $(g_2,\beta_2,C_2)$ on $\Sigma$ satisfy the above structure equations and define the same torsion-free spacelike minimal Lagrangian connection $\nabla$ on $T\Sigma$. Then $g_1=\pm \mathrm{Ric}(\nabla)=g_2$ and consequently we obtain $\beta_1=\beta_2$ as well as $C_1=C_2$, since these quantities are defined in terms of $\nabla \mathrm{Ric}(\nabla)$ by using the metric $g_1=g_2$.  
\end{proof}

\begin{rmk}
\cref{rmk:liouvcurv} immediately implies that a minimal Lagrangian connection is projectively flat if and only if the cubic differential $C$ is holomorphic. 
\end{rmk}
Another consequence of the structure equation is:
\begin{ppn}\label{ppn:chartotgeo}
Let $\nabla$ be a timelike/spacelike Lagrangian connection that is totally geodesic. Then $\nabla$ is the Levi-Civita connection of a metric $g$ of Gauss curvature $K_g=\pm 1$.
\end{ppn}
\begin{proof}
The Lagrangian connection $\nabla$ is totally geodesic if and only if the second fundamental form vanishes identically or equivalently, if $\beta$ and $C$ vanish identically. In this case~\eqref{eq:twistconfcon} implies that $\theta$ is the Levi-Civita connection of $g$ and the structure equation~\eqref{eq:mainpde} gives that $g$ has Gauss curvature $\pm 1$.  
\end{proof}

\begin{rmk}
As we have mentioned previously in \cref{rmk:alignedrep}, a twisted Weyl connection on $(\Sigma,[g])$ defines an AH structure $(\mathfrak{p}(\nabla),[g])$. Moreover, a twisted Weyl connection arising from a triple $(g,\beta,C)$ satisfying $\ov{\partial} C=(\beta-\i\star_g\beta)\otimes C$ defines an associated AH structure $(\mathfrak{p}(\nabla),[g])$ which is~\textit{naive Einstein} in the terminology of~\cite{arXiv:0909.1897,MR3137456,MR3738964}. Therefore, every minimal Lagrangian connection defines a naive Einstein AH structure. 
\end{rmk}

\section{The spherical case}

The system of equations governing minimal Lagrangian connections are easy to analyse on the $2$-sphere $S^2$:

\begin{ppn}\label{ppn:class2sphere}
A connection on the tangent bundle of $S^2$ is minimal Lagrangian if and only if it is the Levi-Civita connection of a metric of positive Gauss curvature.  
\end{ppn}
\begin{proof}
Let $\nabla$ be a minimal Lagrangian connection on $TS^2$ with associated triple $(g,\beta,C)$. Since $\beta$ is closed and $H^1(S^2)=0$, the $1$-form $\beta$ is exact and hence there exists a smooth real-valued function $r$ on $S^2$ such that $\beta=\d r$. Hence we have
$$
\ov{\partial} C=\left(\d r-\i\star_g\d r\right)\otimes C. 
$$
Observe that $\d r-\i\star_g\d r=2\ov{\partial}r$, therefore, the cubic differential $\e^{-2r}C$ is holomorphic. Since, by Riemann-Roch, there are no non-trivial cubic holomorphic differentials on the $2$-sphere, $C$ must vanish identically. The connection form~\eqref{eq:twistconfcon} of $\nabla$ thus becomes
\[
\theta=\begin{pmatrix} -\d r & \star_g \d r-\varphi \\\varphi-\star_g \d r & -\d r\end{pmatrix},
\]
where $\varphi$ denotes the Levi-Civita connection form of $g$. We conclude that $\nabla$ is a Weyl connection given by 
\[
\nabla={}^g\nabla+g\otimes {}^g\nabla r-\d r \otimes \mathrm{Id}-\mathrm{Id}\otimes \d r,
\]
where ${}^g\nabla r$ denotes the gradient of $r$ with respect to $g$. Since the Levi-Civita connection of a Riemannian metric $g$ transforms under conformal change as~\cite[Theorem 1.159]{MR867684}
\[
{}^{\exp(2f)g}\nabla={}^g\nabla-g \otimes {}^g\nabla f+\d f \otimes \mathrm{Id}+\mathrm{Id}\otimes \d f,
\]
we obtain $\nabla={}^{\exp(-2r)g}\nabla$, thus showing that $\nabla$ is the Levi-Civita connection of a Riemannian metric. Moreover, since $\mathrm{Ric}(\nabla)$ must be positive or negative definite, the Gauss curvature of the metric $\e^{-2r}g$ cannot vanish and hence is positive by the Gauss--Bonnet theorem. Finally, \cref{ex:posmetr} shows that conversely the Levi-Civita connection of a Riemannian metric of positive Gauss curvature defines a minimal Lagrangian connection, thus completing the proof.  
\end{proof}

\section{The case of negative Euler-characteristic}

Before we address the classification of minimal Lagrangian connections on compact surfaces of negative Euler characteristic, we observe that every projectively flat spacelike minimal Lagrangian connection defines a properly convex projective structure. Indeed, Labourie gave the following characterisation of properly convex projective manifolds:
\begin{thm}[Labourie~\cite{MR2402597}, Theorem 3.2.1]
Let $(M,\mathfrak{p})$ be an oriented flat projective manifold. Then the following statements are equivalent: 
\begin{itemize}
\item[(i)] $\mathfrak{p}$ is properly convex;
\item[(ii)] there exists a connection $\nabla \in \mathfrak{p}$ preserving a volume form and whose Ricci curvature is negative definite.
\end{itemize} 
\end{thm}
We immediately obtain:
\begin{cor}\label{cor:projflatminlagpropconv}
Let $\nabla$ be a projectively flat spacelike minimal Lagrangian connection on the oriented surface $\Sigma$. Then $\nabla$ defines a properly convex projective structure. 
\end{cor}
\begin{proof}
In \cref{rmk:liouvcurv} we have seen that a minimal Lagrangian connection $\nabla$ is projectively flat if and only if $\beta$ vanishes identically. In the projectively flat case the connection $1$-form $\theta$ of $\nabla$ thus is (see~\eqref{eq:twistconfcon})
\[
\theta=\begin{pmatrix} 0 & -\varphi \\ \varphi & 0\end{pmatrix}+\begin{pmatrix} c_1\omega^1-c_2\omega^2 & -c_2\omega^1-c_1\omega^2\\ -c_2\omega^1-c_1\omega^2 & -c_1\omega^1+c_2 \omega^2\end{pmatrix}.
\]
In particular, the trace of $\theta$ vanishes identically and hence $\nabla$ preserves the volume form of $g$. Since $\mathrm{Ric}(\nabla)=-g$, the claim follows by applying Labourie's result.
\end{proof}
\subsection{Classification}

In \cref{sec:minlag} we have seen that a triple $(g,\beta,C)$ on an oriented surface $\Sigma$ satisfying \eqref{eq:mainpde},\eqref{eq:weakhol},\eqref{eq:closedness} uniquely determines a minimal Lagrangian connection on $T\Sigma$. In this section we will show that in the case where $\Sigma$ is compact and has negative Euler characteristic $\chi(\Sigma)$, the conformal equivalence $[g]$ of $g$ and the cubic differential $C$ also uniquely determine $(g,\beta,C)$ and hence the connection, provided $C$ does not vanish identically. In the case where $C$ does vanish identically the connection is determined uniquely in terms of $[g]$ and $\beta$. 

We start by showing that there are no timelike minimal Lagrangian connections on a compact oriented surface of negative Euler-characteristic (the reader may also compare this with~\cite[Theorem 5.4]{MR3137456}).   
\begin{ppn}
Suppose $\nabla^{\prime}$ is a minimal Lagrangian connection on the compact oriented surface $\Sigma$ satisfying $\chi(\Sigma)<0$. Then $\nabla^{\prime}$ is spacelike.  
\end{ppn}
\begin{proof}
Suppose $\nabla^{\prime}$ were timelike and let $g=\mathrm{Ric}(\nabla^{\prime})$. Then we obtain
\[
\int_{\Sigma}\tr_g\mathrm{Ric}(\nabla^{\prime})dA_g=2\int_{\Sigma}dA_g=2\,\mathrm{Area}(\Sigma,g)\geqslant 0 
\]
and hence \cref{ppn:inftwistedconf} and \cref{thm:minmax} imply that
\[
4\pi\chi(\Sigma)=\sup_{\mathfrak{p} \in \mathfrak{P}(\Sigma)}\inf_{\nabla \in \mathfrak{p}} \int_{\Sigma}\tr_g \mathrm{Ric}(\nabla)dA_g\geqslant 0,
\]
a contradiction. 
\end{proof}
Without loosing generality we henceforth assume that the torsion-free minimal Lagrangian connection $\nabla$ on a compact oriented surface $\Sigma$ with $\chi(\Sigma)<0$ is spacelike. We will show that the triple $(g,\beta,C)$ defined by $\nabla$ is uniquely determined in terms of $[g]$ and $(\beta,C)$. 

Suppose $(g,\beta,C)$ with $\beta$ closed satisfy
\[
K_g=-1+2\,|C|_g^2+\delta_g \beta.
\]
Let $g_0$ denote the hyperbolic metric in $[g]$ and write $g=\e^{2u}g_0$, so that
\[
\e^{-2u}(-1-\Delta_{g_0}u)=-1+2\e^{-6u}|C|_{g_0}^2+\e^{-2u}\delta_{g_0}\beta. 
\]
We obtain
\[
-\Delta_{g_0}u=1+\delta_{g_0}\beta-\e^{2u}+2\e^{-4u}|C|^2_{g_0}.
\]
Omitting henceforth reference to $g_0$ we will show:
\begin{thm}\label{thm:mainpde}
Let $(\Sigma,g_0)$ be a compact hyperbolic Riemann surface. Suppose $\beta \in \Omega^1(\Sigma)$ is closed and $C$ is a cubic differential on $\Sigma$. Then the equation
\begin{equation}\label{eq:mainpdehyp}
-\Delta u=1+\delta\beta-\e^{2u}+2\e^{-4u}|C|^2
\end{equation}
admits a unique solution $u \in C^{\infty}(\Sigma)$.   
\end{thm}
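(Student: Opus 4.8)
The plan is to solve~\eqref{eq:mainpdehyp} by the direct method of the calculus of variations, after first using the Hodge decomposition to put the equation into a cleaner form, and to obtain uniqueness from the strict monotonicity of the nonlinearity. The key structural fact I want to expose is that the coefficient of $\e^{2u}$ carries a favourable sign.

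First I would exploit that $\beta$ is closed. On the compact surface $\Sigma$ the Hodge decomposition gives $\beta=\beta_H+\d f$ with $\beta_H$ harmonic and $f\in C^{\infty}(\Sigma)$; since $\delta\beta_H=0$ and $\Delta f=-\delta\d f$, this yields $\delta\beta=\delta\d f=-\Delta f$. Substituting $v=u-f$ turns~\eqref{eq:mainpdehyp} into
$$
-\Delta v=1-\kappa_0\,\e^{2v}+2\,\xi_0\,\e^{-4v},\qquad \kappa_0=\e^{2f}>0,\quad \xi_0=\tau\,\e^{-4f}\geqslant 0.
$$
This is precisely the Euler--Lagrange equation of the functional $\mathcal{E}(v)=\int_{\Sigma}\tfrac12|\d v|^2-v+\tfrac12\kappa_0\e^{2v}+\tfrac12\xi_0\e^{-4v}\,d\mu_{g_0}$, which up to normalisation is the functional $\mathcal{E}_{\kappa,\xi}$ of the introduction; the decisive feature is that the coefficient $\kappa_0$ of $\e^{2v}$ is strictly positive.

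Next I would run the direct method on $W^{1,2}(\Sigma)$. The first point is that $\mathcal{E}$ is finite-valued: since $\dim\Sigma=2$, the Moser--Trudinger inequality guarantees $\e^{pv}\in L^1(\Sigma)$ for every $v\in W^{1,2}(\Sigma)$ and every $p$, so all integrals converge and $\mathcal{E}$ is moreover G\^ateaux differentiable. For coercivity I would split $v=\bar v+v_0$ into its mean $\bar v$ and its mean-zero part $v_0$; the Poincar\'e inequality controls $\|v_0\|_{L^2}$ by $\|\d v\|_{L^2}$, Jensen's inequality gives $\int\kappa_0\e^{2v}\geqslant c_0|\Sigma|\,\e^{2\bar v}$ with $c_0=\min_{\Sigma}\kappa_0>0$, and dropping the nonnegative $\xi_0$-term yields $\mathcal{E}(v)\geqslant \tfrac12\|\d v\|_{L^2}^2-|\Sigma|\,\bar v+\tfrac12 c_0|\Sigma|\,\e^{2\bar v}$. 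Since $\bar v\mapsto -|\Sigma|\bar v+\tfrac12 c_0|\Sigma|\e^{2\bar v}$ tends to $+\infty$ at both ends, the sublevel sets of $\mathcal{E}$ are bounded in $W^{1,2}(\Sigma)$. Weak lower semicontinuity follows term by term: $\tfrac12\|\d v\|^2$ is convex and continuous, the linear term is weakly continuous, and for the nonnegative exponential terms I would pass, via the compact embedding $W^{1,2}(\Sigma)\hookrightarrow L^2(\Sigma)$, to an almost everywhere convergent subsequence and apply Fatou's lemma. The direct method then produces a minimiser $v\in W^{1,2}(\Sigma)$, which is a weak solution of the reduced equation. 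Since the right-hand side $1-\kappa_0\e^{2v}+2\xi_0\e^{-4v}$ lies in every $L^p$ by Moser--Trudinger, elliptic $L^p$-estimates place $v$ in $W^{2,p}$ for all $p$, hence in $C^{1,\alpha}$, and a standard Schauder bootstrap upgrades $v$ to $C^{\infty}(\Sigma)$; then $u=v+f$ solves~\eqref{eq:mainpdehyp} smoothly. Uniqueness is immediate from $\kappa_0>0$: the map $v\mapsto 1-\kappa_0\e^{2v}+2\xi_0\e^{-4v}$ is strictly decreasing, so testing the difference of the equations for two solutions $v_1,v_2$ against $v_1-v_2$ forces $\|\d(v_1-v_2)\|_{L^2}^2\leqslant 0$, whence $v_1=v_2$; equivalently, $\mathcal{E}$ is strictly convex and so has at most one critical point.

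I expect the real work to lie in the coercivity and well-definedness step. Everything hinges on the two-dimensional Moser--Trudinger inequality, without which the exponential terms need not even be integrable on $W^{1,2}(\Sigma)$, together with the strict positivity of $\kappa_0$, which the Hodge reduction makes manifest: the $\e^{2v}$-term controls the functional as $\bar v\to+\infty$, while the linear term controls it as $\bar v\to-\infty$, and it is the interplay of these two that yields coercivity. The regularity and uniqueness steps are then routine.
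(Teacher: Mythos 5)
Your proposal is correct and follows essentially the same route as the paper: Hodge decomposition of the closed form $\beta$ to absorb $\delta\beta$, substitution reducing~\eqref{eq:mainpdehyp} to $-\Delta u=1+\kappa\e^{2u}+2\xi\e^{-4u}$ with $\kappa<0$ (your $\kappa_0=-\kappa>0$), the direct method for the same functional $\mathcal{E}_{\kappa,\xi}$ on $W^{1,2}(\Sigma)$ using the Moser--Trudinger inequality, an identical $L^p$--Schauder bootstrap for smoothness, and uniqueness from strict convexity, equivalently the strict monotonicity of the nonlinearity. The only differences are in how standard facts are justified: you prove coercivity by hand via Jensen and Poincar\'e and weak lower semicontinuity via Fatou, whereas the paper bounds $\mathcal{E}_{\kappa,\xi}\geqslant\mathcal{E}_{\kappa,0}$ by the prescribed-Gauss-curvature functional and invokes the standard existence result for continuous, coercive, strictly convex functionals on a reflexive Banach space.
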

Using the Hodge decomposition theorem it follows from the closedness of $\beta$ that we may write $\beta=\gamma+\d v$ for a real-valued function $v \in C^{\infty}(\Sigma)$ and a unique harmonic $1$-form $\gamma \in \Omega^1(\Sigma)$. Since $\gamma$ is harmonic, it is co-closed, hence~\eqref{eq:mainpdehyp} becomes
\[
\Delta u=-1-\delta\d v+\e^{2u}-2\e^{-4u}|C|^2=-1+\Delta v+\e^{2u}-2\e^{-4u}|C|^2.
\] 
Writing $u^{\prime}:=u-v$, we obtain
\[
\Delta u^{\prime}=-1+e^{2(u^{\prime}+v)}-2\e^{-4(u^{\prime}+v)}|C|^2.
\]
Using the notation $\kappa=-\e^{2v}<0$ and $\tau=\e^{-4v}|C|^2$, as well as renaming $u:=u^{\prime}$, we see that \eqref{thm:mainpde} follows from:

\begin{thm}\label{thm:mainpdevariant}
Let $(\Sigma,g_0)$ be a compact hyperbolic Riemann surface. Suppose $\kappa,\tau \in C^{\infty}(\Sigma)$ satisfy $\kappa<0$ and $\tau \geqslant 0$. Then the equation  
\begin{equation}\label{eq:mainpdevariant}
-\Delta u=1+\kappa\e^{2u}+2\tau\e^{-4u}
\end{equation}
admits a unique solution $u \in C^{\infty}(\Sigma)$. 
\end{thm}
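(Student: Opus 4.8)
The plan is to produce the solution of~\eqref{eq:mainpdevariant} as the unique minimiser of a variational functional. Set
$$
\mathcal{F}(u)=\int_{\Sigma}\left(\tfrac{1}{2}|\d u|^2_{g_0}-u-\tfrac{\kappa}{2}\e^{2u}+\tfrac{\xi}{2}\e^{-4u}\right)\d\mu_{g_0},
$$
which is, up to the normalisation of its lower-order terms, the functional $\mathcal{E}_{\kappa,\xi}$ from the introduction; since $-\Delta=\delta_{g_0}\d$ on functions, its Euler--Lagrange equation is precisely~\eqref{eq:mainpdevariant}. I would then run the direct method on the Hilbert space $W^{1,2}(\Sigma)$: first show that $\mathcal{F}$ is coercive and weakly lower semicontinuous to obtain a minimiser, then use elliptic regularity to make it smooth, and finally deduce uniqueness from strict convexity of $\mathcal{F}$.

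For coercivity I would split $u=\bar u+u_0$ into its $g_0$-mean $\bar u$ and the oscillation $u_0$ with $\int_\Sigma u_0\,\d\mu_{g_0}=0$. Since $\Sigma$ is compact and $\kappa<0$ is continuous, $|\kappa|\geqslant\delta>0$, and Jensen's inequality applied to the mean-zero $u_0$ gives $\int_\Sigma\e^{2u_0}\,\d\mu_{g_0}\geqslant\mathrm{Area}(\Sigma,g_0)$, whence $-\tfrac{\kappa}{2}\int_\Sigma\e^{2u}\,\d\mu_{g_0}\geqslant\tfrac{\delta}{2}\,\mathrm{Area}(\Sigma,g_0)\,\e^{2\bar u}$. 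Combining this with the Poincaré inequality $\|u_0\|_{L^2}\leqslant C\|\d u_0\|_{L^2}$ and the fact that $t\mapsto\tfrac{\delta}{2}\mathrm{Area}(\Sigma,g_0)\,\e^{2t}-\mathrm{Area}(\Sigma,g_0)\,t$ is proper and bounded below yields $\mathcal{F}(u)\to+\infty$ as $\|u\|_{W^{1,2}}\to\infty$; the key point is that the exponential term, which has a favourable sign because $\kappa<0$, dominates the indefinite linear term $-\int_\Sigma u$. Weak lower semicontinuity is then routine: the Dirichlet term is convex and continuous, the linear term is weakly continuous, and the nonnegative exponential terms $\tfrac{|\kappa|}{2}\e^{2u}$ and $\tfrac{\xi}{2}\e^{-4u}$ are lower semicontinuous along a weakly convergent minimising sequence by Fatou's lemma, using that Rellich's theorem supplies almost-everywhere convergence. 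A minimiser $u$ of finite energy therefore exists.

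It remains to upgrade $u$ to a smooth solution and to prove uniqueness. Here the genuine subtlety is that in dimension two $W^{1,2}(\Sigma)$ does not embed into $L^\infty$, so the exponential nonlinearities need care: by the Moser--Trudinger inequality one has $\e^{2u},\e^{-4u}\in L^p(\Sigma)$ for every $p<\infty$, which both justifies differentiating $\mathcal{F}$ to obtain the weak form of~\eqref{eq:mainpdevariant} and places its right-hand side in every $L^p$. Elliptic $L^p$-estimates then give $u\in W^{2,p}$ for all $p$, hence $u\in C^{1,\alpha}$; the right-hand side is now Hölder continuous, and Schauder theory with a standard bootstrap yields $u\in C^\infty(\Sigma)$. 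For uniqueness I would note that $\mathcal{F}$ is strictly convex: the Dirichlet term is convex and strictly convex modulo additive constants, while $u\mapsto\tfrac{|\kappa|}{2}\int_\Sigma\e^{2u}$ is strictly convex in the constant directions because $|\kappa|>0$ everywhere. A strictly convex functional has at most one critical point, and every solution of~\eqref{eq:mainpdevariant} is a critical point of $\mathcal{F}$, so the solution is unique.

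I expect the real work to lie in the coercivity estimate and the two-dimensional exponential analysis; once these are secured, existence is formal and uniqueness is immediate from convexity. As an alternative that bypasses the Moser--Trudinger step, one can trap a solution between constant barriers: a large constant $M$ is a supersolution since $1+\kappa\e^{2M}+2\xi\e^{-4M}<0$ for $M\gg0$ (as $\kappa<0$), and a large negative constant $-N$ is a subsolution since $1+\kappa\e^{-2N}+2\xi\e^{4N}>0$ for $N\gg0$ (as $\xi\geqslant0$). The method of sub- and supersolutions then produces an $L^\infty$-bounded, hence smooth, solution directly, with uniqueness again coming from the maximum principle: at a positive interior maximum of a difference $u_1-u_2$ of two solutions, the signs $\kappa<0$ and $\xi\geqslant0$ force the difference to vanish there, and symmetrically at a minimum.
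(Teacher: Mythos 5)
Your proof is correct and follows essentially the same variational route as the paper: your functional $\mathcal{F}$ is exactly $\tfrac{1}{2}\mathcal{E}_{\kappa,\xi}$, and your regularity chain (Moser--Trudinger to place $\e^{2u},\e^{-4u}$ in every $L^p$, Calder\'on--Zygmund to get $W^{2,p}$, Sobolev embedding to $C^{1,\alpha}$, then Schauder bootstrap) as well as your convexity-based uniqueness coincide with the paper's two lemmas; your explicit remark that every solution is a critical point and a strictly convex functional has at most one critical point is, if anything, slightly more careful than the paper's phrasing. Two steps differ in execution, both in the direction of self-containedness: where the paper disposes of coercivity in one line via the comparison $\mathcal{E}_{\kappa,\xi}\geqslant\mathcal{E}_{\kappa,0}$ (valid since $\xi\geqslant 0$) and the known coercivity of the prescribed-Gauss-curvature functional, you prove it directly by the mean/oscillation splitting with Jensen and Poincar\'e; and where the paper invokes the abstract theorem (cited from Struwe) that a continuous, strictly convex, coercive functional on a reflexive Banach space attains a unique minimum, you verify weak lower semicontinuity by hand via Rellich and Fatou. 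Your alternative argument by constant barriers is genuinely different from anything in the paper and is sound: the sign checks for large constants are correct, and the sub/supersolution method buys an $L^\infty$ bound that bypasses the Moser--Trudinger analysis entirely, at the cost of obscuring the convexity structure the paper exploits. The only phrasing to tighten there is the uniqueness step: at a positive interior maximum of $w=u_1-u_2$ the signs $\kappa<0$ and $\xi\geqslant 0$ give $-\Delta w<0$, contradicting $-\Delta w\geqslant 0$ at a maximum, so the conclusion is that the maximum of $w$ cannot be positive (and symmetrically the minimum cannot be negative), hence $w\equiv 0$ -- not that $w$ ``vanishes there''.
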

\begin{rmk}
This theorem can also be proved using the technique of sub -- and supersolutions, see~\cite[Chapter 9]{MR3137456}. Here we instead use techniques from the calculus of variations. 
\end{rmk}
In order to prove this theorem we define an appropriate functional $\mathcal{E}_{\kappa,\tau}$ on the Sobolev space $W^{1,2}(\Sigma)$. As usual, we say a function $u \in W^{1,2}(\Sigma)$ is a~\textit{weak solution} of~\eqref{eq:mainpdevariant} if for all $\phi \in C^{\infty}(\Sigma)$
\begin{equation}\label{eq:weakpde}
0=\int_{\Sigma}-\langle \d u,\d \phi\rangle+\left(1+\kappa\e^{2u}+2\tau\e^{-4u}\right)\phi\, dA.
\end{equation}
Note that this definition makes sense. Indeed, it follows from the Moser--Trudinger inequality that the exponential map sends the Sobolev space $W^{1,2}(\Sigma)$ into $L^{p}(\Sigma)$ for every $p< \infty$, hence the right hand side of~\eqref{eq:weakpde} is well defined. 
\begin{lem} Suppose $u \in W^{1,2}(\Sigma)$ is a critical point of the functional 
\[
\mathcal{E}_{\kappa,\tau} : W^{1,2}(\Sigma) \to \ov{\R}, \quad u \mapsto \frac{1}{2}\int_{\Sigma}|\d u|^2-2u-\kappa\e^{2u}+\tau\e^{-4u}dA.
\]
Then $u \in C^{\infty}(\Sigma)$ and $u$ solves~\eqref{eq:mainpdevariant}. 
\end{lem}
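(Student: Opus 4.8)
The plan is to first derive the Euler--Lagrange equation of $\mathcal{E}_{\kappa,\xi}$ in weak form, and then to bootstrap the regularity of a weak solution.

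For the first part, fix $\phi \in C^{\infty}(\Sigma)$ and consider the scalar function $t \mapsto \mathcal{E}_{\kappa,\xi}(u+t\phi)$. Since $\phi$ is smooth, hence bounded, the factors $\e^{2t\phi}$ and $\e^{-4t\phi}$ are uniformly bounded for $t$ ranging in a bounded interval; combined with the Moser--Trudinger bound $\e^{2u},\e^{-4u}\in L^p(\Sigma)$ for all $p<\infty$, the difference quotients of the integrand are dominated by an integrable function. Dominated convergence then justifies differentiation under the integral sign, and a direct computation gives
$$
\frac{\d}{\d t}\Big|_{t=0}\mathcal{E}_{\kappa,\xi}(u+t\phi)=\int_{\Sigma}\langle \d u,\d\phi\rangle-\left(1+\kappa\e^{2u}+2\xi\e^{-4u}\right)\phi\,d\mu.
$$
Since $u$ is a critical point this integral vanishes for every $\phi$, which is precisely the weak formulation~\eqref{eq:weakpde}; thus $u$ is a weak solution of~\eqref{eq:mainpdevariant}.

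For the regularity, write $f=1+\kappa\e^{2u}+2\xi\e^{-4u}$, so that $-\Delta u=f$ holds weakly. As $\kappa,\xi\in C^{\infty}(\Sigma)$ are bounded and $\e^{2u},\e^{-4u}\in L^p(\Sigma)$ for every finite $p$ by Moser--Trudinger, we have $f\in L^p(\Sigma)$ for all $p<\infty$. Elliptic $L^p$-regularity then upgrades $u$ to $W^{2,p}(\Sigma)$ for every $p<\infty$, and since $\dim\Sigma=2$ the Sobolev embedding $W^{2,p}(\Sigma)\hookrightarrow C^{1,\alpha}(\Sigma)$ (for any $p>2$) yields $u\in C^{1,\alpha}(\Sigma)$; in particular $u$ is bounded and H\"older continuous. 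From here a standard Schauder bootstrap finishes the argument: once $u\in C^{k,\alpha}(\Sigma)$, the smoothness of $\kappa,\xi$ and of the exponential force $f=1+\kappa\e^{2u}+2\xi\e^{-4u}\in C^{k,\alpha}(\Sigma)$, and the Schauder estimate for $-\Delta u=f$ gives $u\in C^{k+2,\alpha}(\Sigma)$. Iterating this gain of two derivatives shows $u\in C^{\infty}(\Sigma)$.

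The main obstacle is the justification of the first two steps, both of which rest on the Moser--Trudinger inequality to control the exponential nonlinearities: without the resulting $L^p$-integrability of $\e^{2u}$ and $\e^{-4u}$ one could neither differentiate the functional nor initiate the regularity bootstrap. Once continuity of $u$ has been established, the iterated Schauder estimate is entirely routine.
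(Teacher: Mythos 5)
Your proposal is correct and follows essentially the same route as the paper: compute the first variation of $\mathcal{E}_{\kappa,\xi}$ (with the exponential terms controlled via the Moser--Trudinger inequality) to obtain the weak formulation~\eqref{eq:weakpde}, then bootstrap via Calder\'on--Zygmund $L^p$-regularity, Sobolev embedding into $C^{1,\alpha}(\Sigma)$, and iterated Schauder estimates to conclude $u\in C^{\infty}(\Sigma)$. The only cosmetic difference is that you differentiate along smooth test directions $\phi$ with a dominated-convergence justification, whereas the paper differentiates $t\mapsto\mathcal{E}_{\kappa,\xi}(u+tv)$ for general $v\in W^{1,2}(\Sigma)$ using H\"older's inequality --- both are valid and yield the same conclusion.
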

\begin{proof}
For $u,v\in W^{1,2}(\Sigma)$ we define $\gamma_{u,v}(t)=u+t v$ for $t \in \R$. We consider the curve $\Gamma_{u,v}=\mathcal{E}_{\kappa,\tau}\circ \gamma_{u,v} : \R \to \R$ so that
\begin{multline}
\Gamma_{u,v}(t)=\frac{1}{2}\int_{\Sigma}|\d u|^2+2t\langle \d u, \d v\rangle+t^2|\d v|^2\\-2(u+t v)-\kappa\e^{2(u+tv)}+\tau \e^{-4(u+tv)}dA.
\end{multline}
The curve $\Gamma_{u,v}(t)$ is differentiable in $t$ with derivative
\[
\frac{\d}{\d t} \Gamma_{u,v}(t)=\int_{\Sigma} \langle \d u,\d v\rangle+t |\d v|^2-v-v\kappa \e^{2(u+tv)}-2v\tau \e^{-4(u+tv)}dA. 
\]
Note that this last expression is well-defined. Again, it follows from the Moser--Trudinger inequality that $\e^{2(u+tv)} \in L^2(\Sigma)$ for all $u,v \in W^{1,2}(\Sigma)$ and  $t \in \R$. Since $W^{1,2}(\Sigma)\subset L^2(\Sigma)$ it follows that $v\e^{2(u+tv)}$ is in $L^1(\Sigma)$ by H\"older's inequality and thus so is $v\e^{-4(u+tv)}$. In particular, assuming that $u$ is a critical point and setting $t=0$ after differentiation gives 
\[
0=\left.\frac{\d}{\d t}\right|_{t=0} \Gamma_{u,v}(t)=\int_{\Sigma}\langle \d u,\d v\rangle-v-v\kappa \e^{2u}-2v\tau \e^{-4u}dA. 
\]
Since $C^{\infty}(\Sigma)\subset W^{1,2}(\Sigma)$ it follows that $u$ is a weak solution of~\eqref{eq:mainpdevariant}. Since the right hand side of~\eqref{eq:mainpdevariant} is in $L^{p}(\Sigma)$ for all $p<\infty$, it follows from the Cald\'eron-Zygmund inequality that $u \in W^{2,p}(\Sigma)$ for any $p<\infty$. Therefore, by the Sobolev embedding theorem, $u$ is an element of the H\"older space $C^{1,\alpha}(\Sigma)$ for any $\alpha<1$. Since the right hand side of~\eqref{eq:mainpdevariant} is H\"older continuous in $u$, it follows from Schauder theory that $u \in C^{2}(\Sigma)$, so that $u$ is a classical solution of~\eqref{eq:mainpdevariant}. Iteration of the Schauder estimates then gives that $u \in C^{\infty}(\Sigma)$.           
\end{proof}
Since $\tau \geqslant 0$ we have $\mathcal{E}_{\kappa,\tau}\geqslant \mathcal{E}_{\kappa,0}$ where here $0$ stands for the zero-function. The functional $\mathcal{E}_{\kappa,0}$ appears in the variational formulation of the equation for prescribed Gauss curvature $\kappa$ of a metric $g=\e^{2u}g_0$ on $\Sigma$. In particular, $\mathcal{E}_{\kappa,0}$ is well-known to be coercive and hence so is $\mathcal{E}_{\kappa,\tau}$. In addition, we have: 
\begin{lem} The functional $\mathcal{E}_{\kappa,\tau}$ is strictly convex on $W^{1,2}(\Sigma)$. 
\end{lem}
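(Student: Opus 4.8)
The plan is to use the standard fact that a functional on a vector space is strictly convex precisely when its restriction to every non-degenerate line segment is strictly convex, and to detect this strict convexity through the second derivative along such segments. Concretely, for $u,v \in W^{1,2}(\Sigma)$ with $v$ not vanishing almost everywhere, I would revisit the curve $\Gamma_{u,v}(t)=\mathcal{E}_{\kappa,\xi}(u+tv)$ already introduced in the proof of the preceding lemma and differentiate once more the first-derivative formula
$$
\frac{\d}{\d t}\Gamma_{u,v}(t)=\int_{\Sigma}\langle\d u,\d v\rangle+t|\d v|^2-v-v\kappa\e^{2(u+tv)}-2v\xi\e^{-4(u+tv)}\,d\mu,
$$
obtaining
$$
\frac{\d^2}{\d t^2}\Gamma_{u,v}(t)=\int_{\Sigma}|\d v|^2-2\kappa\,v^2\e^{2(u+tv)}+8\xi\,v^2\e^{-4(u+tv)}\,d\mu.
$$
Exactly as in the previous lemma, differentiating under the integral sign is justified by the same estimates: the Moser--Trudinger inequality places $\e^{2(u+tv)}$ and $\e^{-4(u+tv)}$ in $L^p(\Sigma)$ for every $p<\infty$, while the Sobolev embedding $W^{1,2}(\Sigma)\hookrightarrow L^q(\Sigma)$ for every $q<\infty$ puts $v^2$ in every $L^p(\Sigma)$, so all three integrands lie in $L^1(\Sigma)$ by H\"older's inequality.

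Next I would read off the sign of each of the three terms. The Dirichlet term contributes $|\d v|^2\geqslant 0$; the hypothesis $\xi\geqslant 0$ makes $8\xi\,v^2\e^{-4(u+tv)}\geqslant 0$; and -- this is the decisive point -- the hypothesis $\kappa<0$ makes $-2\kappa\,v^2\e^{2(u+tv)}\geqslant 0$, with strict inequality wherever $v\neq 0$. Since $v$ is non-zero on a set of positive measure and $\e^{2(u+tv)}>0$ almost everywhere, the exponential term alone yields
$$
\frac{\d^2}{\d t^2}\Gamma_{u,v}(t)\geqslant\int_{\Sigma}-2\kappa\,v^2\e^{2(u+tv)}\,d\mu>0
$$
for every $t\in\R$. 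Hence $\Gamma_{u,v}$ is strictly convex in $t$, and as $u$ and $v\not\equiv 0$ were arbitrary this proves that $\mathcal{E}_{\kappa,\xi}$ is strictly convex on $W^{1,2}(\Sigma)$.

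I do not anticipate a genuine obstacle here. The one conceptual point worth emphasising is that strict convexity cannot come from the Dirichlet energy, which is degenerate along the constant functions, nor from the linear term $-2\int_{\Sigma}u\,d\mu$, whose second variation vanishes identically; the entire effect is produced by the term $-\kappa\e^{2u}$, and the sign condition $\kappa<0$ is precisely what renders it strictly convex. The only technical care needed is the twofold differentiation under the integral, which is handled verbatim by the Moser--Trudinger and Sobolev estimates invoked in the previous lemma.
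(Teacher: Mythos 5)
Your proof is correct and takes essentially the same route as the paper: the same curve $\Gamma_{u,v}(t)=\mathcal{E}_{\kappa,\xi}(u+tv)$, the same second-derivative formula, and the same Moser--Trudinger/Sobolev/H\"older justifications for differentiating under the integral. The only (cosmetic) difference lies in the final positivity step: the paper bounds the second variation at $t=0$ below by $\Vert \d v\Vert^2_{L^2(\Sigma)}$ and then treats non-zero constant $v$ separately, whereas you extract strict positivity directly from the term $-2\kappa v^2\e^{2(u+tv)}$, which handles every $v\not\equiv 0$ uniformly and at every $t$ -- a marginally cleaner bookkeeping of the same argument.
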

\begin{proof}
Let $u,v \in W^{1,2}(\Sigma)$ be given. Using the notation of the previous lemma, we observe that $\Gamma_{u,v}(t)$ is twice differentiable in $t$ with derivative
\begin{equation}\label{eq:secondvar}
\frac{\d^2}{\d t^2}\Gamma_{u,v}(t)=\int_{\Sigma} |\d v|^2-2v^2\kappa\e^{2(u+tv)}+8v^2\tau\e^{-4(u+tv)}dA.
\end{equation}
Note again that by Sobolev embedding $v^2 \in L^2(\Sigma)$ for $v \in W^{1,2}(\Sigma)$ and that both $\e^{2(u+tv)}$ and $\e^{-4(u+tv)}$ are in $L^2(\Sigma)$, hence the right hand side of the equation ~\eqref{eq:secondvar} is well-defined by H\"older's inequality. In particular, computing the second variation gives
\begin{align*}
\mathcal{E}^{\prime\prime}_{\kappa,\tau}(u)[v,v]&=\left.\frac{\d^2}{\d t^2}\right|_{t=0}\mathcal{E}_{\kappa,\tau}(u+tv)\\
&=\int_{\Sigma}|\d v|^2dA+2\int_{\Sigma}v^2(4\tau-\e^{6u}\kappa)\e^{-4u}dA\\
&\geqslant \Vert \d v \Vert^2_{L^2(\Sigma)},
\end{align*}
where we have used that $\tau\geqslant 0$ and $\kappa<0$. Since for a non-zero constant function $v$ we obviously have $\mathcal{E}^{\prime\prime}_{\kappa,\tau}(u)[v,v]>0$ it follows that the quadratic form $\mathcal{E}_{\kappa,\tau}^{\prime\prime}$ is positive definite on $W^{1,2}(\Sigma)$. Hence, the claim is proved.  
\end{proof}
\begin{proof}[Proof of \cref{thm:mainpdevariant}]
We have shown that $\mathcal{E}_{\kappa,\tau}$ is a continuous strictly convex coercive functional on the reflexive Banach space $W^{1,2}(\Sigma)$, hence $\mathcal{E}_{\kappa,\tau}$ attains a unique minimum on $W^{1,2}(\Sigma)$, see for instance~\cite{MR2431434}. Since we know that the minimum is smooth, \cref{thm:mainpdevariant} is proved.
\end{proof}

We define the area of a timelike/spacelike connection to be the area of $o(\Sigma)\subset (T^*\Sigma,h_{\nabla})$. We have:
\begin{thm}\label{thm:areabound}
Let $\nabla$ be a minimal Lagrangian connection on the compact oriented surface $\Sigma$ with $\chi(\Sigma)<0$. Then we have
\[
\mathrm{Area}(\nabla)=- 2\pi\chi(\Sigma)+2\Vert C\Vert_g^2.
\]
\end{thm}
\begin{proof}
We have seen that the Gauss curvature of the metric $o^*h_{\nabla}=g=-\mathrm{Ric}(\nabla)$ defined by a minimal Lagrangian connection $\nabla$ on $\Sigma$ satisfies
\[
K_g=-1+2\,|C|_g^2+\delta_g \beta.
\]
Integrating against $dA_g$ and using the Stokes and Gauss--Bonnet theorem gives
\[
2\pi\chi(\Sigma)=-\mathrm{Area}(\nabla)+2\Vert C\Vert^2_g,
\]
thus proving the claim. 
\end{proof}
\begin{rmk}
An obvious consequence of \cref{thm:areabound} is the area inequality 
\begin{equation}\label{eq:areabound}
\mathrm{Area}(\nabla)\geqslant -2\pi \chi(\Sigma) 
\end{equation}
holding for minimal Lagrangian connections. Recall that if $\nabla$ is projectively flat, then the projective structure defined by $\nabla$ is properly convex. Labourie~\cite{MR2402597} associated to every properly convex projective surface $(\Sigma,\mathfrak{p})$ a unique minimal mapping from the universal cover $\tilde{\Sigma}$ to the symmetric space $\mathrm{SL}(3,\R)/\mathrm{SO}(3)$ which satisfies the very same area inequality, that is~\eqref{eq:areabound}, see~\cite{MR3583351}. Moreover, it is shown in~\cite{MR3583351} that equality holds if and only if $\mathfrak{p}$ is defined by the Levi-Civita connection of a hyperbolic metric. 
\end{rmk}
\begin{defn}
We call a minimal Lagrangian connection $\nabla$ ~\textit{area minimising} if $\nabla$ has area $-2\pi\chi(\Sigma)$. 
\end{defn}
\begin{rmk}
\cref{thm:areabound} shows that a minimal Lagrangian connection $\nabla$ is area minimising if and only if the induced cubic differential vanishes identically. We have shown in \cref{ppn:chartotgeo} that in the projectively flat case -- when $\beta$ vanishes identically --  this translates to $\nabla$ being the Levi-Civita connection of a hyperbolic metric, a statement in agreement with~\cite{MR3583351}.
\end{rmk} 
\cref{thm:mainpde} shows that the triple $(g,\beta,C)$ is uniquely determined in terms of the conformal equivalence class $[g]$, the cubic differential $C$ and the $1$-form $\beta$ on $\Sigma$. Since $C$ can locally be rescaled to be holomorphic, its zeros must be isolated and hence $\beta$ is uniquely determined by $C$ provided $C$ does not vanish identically. Therefore, applying  \cref{cor:cortriplecon} shows:
\begin{thm}
Let $\Sigma$ be a compact oriented surface with $\chi(\Sigma)<0$. Then we have:
\begin{itemize}
\item[(i)] there exists a one-to-one correspondence between area minimising Lagrangian connections on $T\Sigma$ and pairs $([g],\beta)$ consisting of a conformal structure $[g]$ and a closed $1$-form $\beta$ on $\Sigma$;
\item[(ii)] there exists a one-to-one correspondence between non-area minimising minimal Lagrang\-ian connections on $T\Sigma$ and pairs $([g],C)$ consisting of a conformal structure $[g]$ and a non-trivial cubic differential $C$ on $\Sigma$ that satisfies $\ov{\partial} C=\left(\beta-\i\star_g \beta\right)\otimes C$ for some closed $1$-form $\beta$. 
\end{itemize}
\end{thm}

\subsection{Concluding remarks}

\begin{rmk}
We have proved that on a compact oriented surface $\Sigma$ of negative Euler characteristic we have a bijective correspondence between projectively flat spacelike minimal Lagrangian connections and pairs $([g],C)$ consisting of a conformal structure $[g]$ and a holomorphic cubic differential $C$ on $\Sigma$. By the work of Labourie~\cite{MR2402597} and Loftin~\cite{MR1828223}, the latter set is also in bijective correspondence with the properly convex projective structures on $\Sigma$. Since by \cref{cor:projflatminlagpropconv} every projectively flat spacelike minimal Lagrangian connection defines a properly convex projective structure, we conclude that every such projective structure arises from a unique projectively flat spacelike minimal Lagrangian connection. 
\end{rmk}
\begin{rmk}
It is tempting to speculate that a compact oriented projective surface $(\Sigma,\mathfrak{p})$ of negative Euler characteristic contains at most one minimal Lagrangian connection. Some partial results in this direction have been obtained in~\cite{MR3384876,arXiv:1804.04616}.
\end{rmk}

\begin{rmk}
Recall that the universal cover $\tilde{\Sigma}$ of a properly convex projective surface $(\Sigma,\mathfrak{p})$ is a convex subset of $\mathbb{RP}^2$. Pulling back the minimal Lagrangian connection $\nabla \in \mathfrak{p}$ to the universal cover gives a section of $A \to \tilde{\Sigma}$, where now, by the work of Libermann~\cite{MR0066020}, the total space of the affine bundle $A \to \tilde{\Sigma}$ is contained in the submanifold of the para-K\"ahler manifold $A_0\subset \mathbb{RP}^2\otimes \mathbb{RP}^{2*}$ consisting of non-incident point-line pairs. In particular, we obtain a minimal Lagrangian immersion $\tilde{\Sigma} \to A$, recovering the result of Hildebrand~\cite{MR2854275,MR2854277} in the case of two dimensions. Therefore, using the result of Loftin~\cite{MR1828223}, one should be able to show that every properly convex projective manifold arises from a minimal Lagrangian connection.  
\end{rmk}

\begin{rmk}
The case of the $2$-torus can be treated with similar techniques, except for the possible occurrence of Lorentzian minimal Lagrangian connections. This will be addressed elsewhere. 
\end{rmk}

\begin{rmk}
In higher dimensions, the class of totally geodesic Lagrangian connections contains the Levi-Civita connection of Einstein metrics of non-zero scalar curvature. We also refer the reader to~\cite{MR3158041,MR3159950} for a study of Einstein metrics in projective geometry.
\end{rmk}

\begin{rmk}
In~\cite{arXiv:1510.01043}, the author has introduced the notion of an extremal conformal structure for a projective manifold $(M,\mathfrak{p})$. In two-dimensions, the naive Einstein AH structures of~\cite{MR3137456} appear to provide examples of projective surfaces admitting an extremal conformal structure. This may be taken up in future work. 
\end{rmk}

\providecommand{\noopsort}[1]{}
\providecommand{\mr}[1]{\href{http://www.ams.org/mathscinet-getitem?mr=#1}{MR~#1}}
\providecommand{\zbl}[1]{\href{http://www.zentralblatt-math.org/zmath/en/search/?q=an:#1}{zbM~#1}}
\providecommand{\arxiv}[1]{\href{http://www.arxiv.org/abs/#1}{arXiv:#1}}
\providecommand{\doi}[1]{\href{http://dx.doi.org/#1}{DOI}}
\providecommand{\MR}{\relax\ifhmode\unskip\space\fi MR }
\providecommand{\MRhref}[2]{%
  \href{http://www.ams.org/mathscinet-getitem?mr=#1}{#2}
}
\providecommand{\href}[2]{#2}

\end{document}